\newtheorem{theorem}{Theorem}
\newtheorem{theoremb}{Theorem}
\newtheorem{theoremc}{Theorem}
\newtheorem{theoreme}{Theorem}
\newtheorem{rk}[theoremc]{Remark}
\newtheorem{lem}[theoreme]{Lemma}
\newtheorem{prop}[theoremb]{Proposition}
\newcommand\bib[1]{\bibitem[#1]{#1}}
\newcommand\com[1]{}
\newcommand\C{{\mathbb C}}
\newcommand\E{\mathcal{E}}
\newcommand\La{\Lambda}
\newcommand\oo{\omega}
\newcommand\op[1]{\mathop{\rm #1}\nolimits}
\newcommand\ot{\otimes}
\newcommand\p{\partial}
\newcommand\R{{\mathbb R}}
\newcommand\z{\sigma}
\begin{document}

 \title{Involutivity of field equations}
 \author{Boris Kruglikov}
 \date{}
 \address{Institute of Mathematics and Statistics, University of Troms\o, Troms\o\ 90-37, Norway.}
 \email{boris.kruglikov@uit.no}
 \keywords{Einstein equations, Einstein-Maxwell equations, involutivity,
Cartan numbers, symbols, Spencer cohomology. \quad
MSC: 83C05, 83C22, 58H10, 58A15.}

 \vspace{-14.5pt}
 \begin{abstract}
We prove involutivity of Einstein, Einstein-Maxwell and other field equations
by calculating the Spencer cohomology of these systems. Relation with
Cartan method is traced in details. Basic implications through
Cartan-K\"ahler theory are derived.
 \end{abstract}

 \maketitle

\section*{Introduction}

The field equations, derived independently by A.Einstein \cite{E}
and D.Hilbert \cite{H}, form the basis of the general relativity.
Soon after these equations had appeared in 1915 (see \cite{LMP,T} for the history of invention)
some very important particular solutions were found,
yet not much has been known on their solution space before E.Cartan's contribution.

In correspondence with A.Einstein \cite{CE} he
established involutivity of various field equations.
Cartan approached this through his theory of exterior differential systems \cite{C$_1$},
namely by calculating Cartan characters and verifying that they pass the Cartan test.
This is quite an involved work. Detailed proof of this is contained in \cite{Ga}
(see also \cite{DT}; quite a different proof of involutivity for Einstein, Yang-Mills and other equations
is contained in \cite{DV}).

In this paper we prove involutivity of the field equations using the
formal theory of differential equations \cite{S}: we calculate all Spencer
cohomology of the system and check their vanishing in the prescribed
range (together with vanishing of the structure tensor).
By Serre's contribution to \cite{GS} this is equivalent to Cartan test.
Since chasing diagrams is considered nowadays standard, this
turns out to be a reasonable path.

Let us notice that up to now the Spencer $\delta$-cohomology has never
been evaluated for the field equations.
We do our calculations for both Einstein and Einstein-Maxwell
system, and also encompass such systems as pure radiation and dust.
We start with vacuum or source-free equations and then impose additional fields,
proving involutivity of various field equations.

We relate our calculations to those of Cartan
which is not obvious, since the two theories -- Cartan and Spencer
-- though accepted being equivalent, are not in direct
correspondence.
Finally we derive some simple but important
implications using the Cartan-K\"ahler theorem.

Let us mention that in his papers \cite{C$_2$} Cartan mostly
considers the so-called unified field theory based on distant
parallelism\footnote{Specific references are vol.II p. 1199--1229
and vol.III-1 p.549--611.}, which corresponds to Einstein 
system with non-zero torsion, so that the number of differential
equations in the system is $22$, not $10$ or $18$ (or $14$) as in
the usual Einstein and Einstein-Maxwell equations. Involutivity of
these two latter do not follow from involutivity of the former upon a
specification. In addition, Cartan arguments by exhibiting relations
between the equations but not proving they are all. The formal theory approach,
adapted here, provides both rigorous and economic way to
prove involutivity.

\section{Background: jets, Spencer cohomology and all this}

We will consider here only the theory of systems of PDEs of the same order $k$ \cite{Go,S}.
The general theory, developed in \cite{KL$_1$,KL$_3$}, shall be useful
for other purposes.

Thus let $\E=\E_k\subset J^k\pi$ be a submanifold in the space of $k$-jets of sections
of a bundle $\pi:E\to M$, subject to certain regularity assumptions, which
include the claim that $\pi_{k,k-1}:\E_k\to J^{k-1}\pi$ is submersion.
We let $\E_l=J^l\pi$ for $l<k$ and $\E_l=\E_k^{(l-k)}$ for $l>k$,
where the latter space is the prolongation defined as
 $$
\E_k^{(l-k)}=\{[s]_x^l\in J^l\pi\,:\text{ jet-prolongation }j^k(s)
\text{ is tangent to }\E_k\text{ at }[s]_x^k\text{ with order }(l-k)\},
 $$
where $[s]_x^k$ is the $k$-jet of the (local) section $s\in\Gamma(\pi)$.
Equation $\E$ is called formally integrable if all the projections $\pi_{l,l-1}:\E_l\to\E_{l-1}$ are submersions.

Let us denote by $N$ the tangent space to the fiber of $\pi$ and by $T$ the tangent space to $M$. Then the symbol spaces $g_t\subset S^tT^*\ot N$ are the kernels of $d\pi_{t,t-1}:T\E_t\to T\E_{t-1}$. We obviously have $g_t=S^tT^*\ot N$ for $t<k$ and
the space $g_k$ is determined by the equation, however the higher index spaces are difficult to calculate without knowledge of formal integrability.

Instead one considers formal prolongations defined as
$g_t=(g_k\ot S^{t-k}T^*)\cap(S^tT^*\ot N)$ for $t>k$. These symbols are united into the Spencer $\delta$-complex
 \begin{equation}\label{Spencer}
0\to g_t\to g_{t-1}\ot T^*\stackrel{\delta}\longrightarrow
g_{t-2}\ot\La^2T^*\stackrel{\delta}\longrightarrow\dots\stackrel{\delta}\longrightarrow
g_{t-i}\ot\La^iT^*\to\dots
 \end{equation}
with morphisms $\delta$ being the symbols of the de Rham operator. The cohomology at
the term $g_{t-i}\ot\La^iT^*$ is denoted by $H^{t-i,i}(\E)$ and is called Spencer
$\delta$-cohomology of $\E$.

Formal theory of PDEs describes obstructions to formal integrability
as elements $W_t\in H^{t-1,2}(\E)$, called curvature, torsion, structure functions or
Weyl tensors. Their vanishing is equivalent to formal integrability
(and in certain cases to local integrability).

Symbolic system $g=\oplus g_t$ is called involutive if $H^{i,j}(g)=0$ for all $i\ne k-1$
and $i+j>0$. This is equivalent to fulfillment of Cartan test for the corresponding EDS
(which in turn means a PDE system of the 1st order).

Equation $\E$ is called involutive if its symbolic system is involutive and in addition
the only obstruction $W_k$ vanishes. Thus involutive systems are formally integrable.

Advantage of involutive systems is that compatibility conditions should be calculated only
at one order, while in general they exist in different places and one shall carry the whole
prolongation-projection method through \cite{S,KLV,KL$_3$}. Fortunately many equations
of mathematical physics are involutive
(this can be easily checked for all determined and underdetermined
equations\footnote{An equation is (under)determined if the symbol of its defining operator
is an isomorphism (resp. surjective).})
and we are going to prove this for relativity equations.

\section{Einstein equations}\label{S2}

We run the setup very briefly, referring to plentiful books on
differential geometry and relativity for details, e.g. \cite{B,K,St}.

Let $M$ be a (4-dimensional in physical applications) manifold, $g$ pseudo-Riemannian
tensor (for relativity: of Lorentz signature (1,3)) with Ricci
tensor $\op{Ric}$ and scalar curvature $R$, $\Lambda$ a cosmological
constant and $\mathcal{T}$ the energy-momenta tensor. The Einstein equations \cite{E,H}
are:
 \begin{equation}\label{E}
\op{Ric}-\tfrac12R\,g+\Lambda\,g=\mathcal{T}.
 \end{equation}
We will assume at first that $\mathcal{T}$ is a given tensor
(in some models like electromagnetic it is traceless, which implies that
the scalar curvature $R$ is constant, but in general it is not so),
and only the metric $g$ is unknown
(further on we'll treat the case, when $\mathcal{T}$ depends on unknowns fields
entering the equations). We can re-write (\ref{E}) as
 $$
G(\op{Ric})=\mathcal{T}_\Lambda,
 $$
where $G(h)=h-\frac12\op{Tr}_g(h)\,g$, $h\in S^2T^*$, is the gravitational
operator\footnote{Here and below $T=TM$ is the tangent bundle to $M$
and $T^*$ is the cotangent bundle. $S^2_+T^*\subset S^2T^*$ is the bundle of non-degenerate quadrics
(not necessary positive definite).}
and $\mathcal{T}_\Lambda=\mathcal{T}-\Lambda\,g$.

Bianchi identity reads
 \begin{equation}\label{div}
\mathfrak{d}(\op{Ric})=0,\ \text{ where }\ \mathfrak{d}=\op{div}_g\circ\,G
 \end{equation}
and $\op{div}_g$ is the divergence operator on symmetric tensors, so that
$\mathfrak{d}h(\xi)=\op{Tr}_g(\nabla_{\cdot}h(\cdot,\xi))-\frac12\nabla_\xi(\op{Tr}_gh)$
for $\xi\in T$. This implies the following conservation law:
 \begin{equation}\label{divT}
\op{div}_g(\mathcal{T}_\Lambda)=\op{div}_g(\mathcal{T})=0.
 \end{equation}
This is the first order PDE w.r.t. $g$ and so system (\ref{E}) is not involutive unless
$\mathcal{T}=0$ (indeed, we get a non-trivial equation of lower order than
(\ref{E}), see Appendix for more details on this).

Thus in what follows in this section we'll concentrate on the vacuum
case\footnote{DeTurck's idea \cite{B} is to use covariance of the
left hand side $G[g]$ of (\ref{E}) and change the equation to
$G[g]=\varphi^*\mathcal{T}$, where $\varphi:M\to M$ is a diffeomorphism, so
that $\mathcal{T}$ is given while $(g,\varphi)$ unknown. This system (coupled with compatibility
$\delta_{\varphi\!_{_*}g}\mathcal{T}=0$) is already involutive for any non-degenerate $\mathcal{T}$
-- see Section \ref{S5}. DeTruck \cite{DT} proves solvability of (\ref{E}) differently --
by establishing involutivity of its prolongation-projection (\ref{E})+(\ref{divT}). \label{DeT}}:
$\mathcal{T}=0$.

Tracing (\ref{E}) by $g$ yields $4\Lambda=R=\op{const}$, so that the Einstein
equation $\E$ is equivalent to
 \begin{equation}\label{Ev}
\op{Ric}=\Lambda\,g.
 \end{equation}
To understand this equation we need to study solvability of
the Ricci operator
 $$
\op{Ric}:C^\infty(S^2_+T^*)\to C^\infty(S^2T^*),
 $$
which gives rise to the sequence of operators
$\phi_{\op{Ric}}:J^{k+2}(S^2_+T^*)\to J^k(S^2T^*)$ with symbols
$\z^{(k)}_{\op{Ric}}:S^{k+2}T^*\ot S^2T^*\to S^kT^*\ot S^2T^*$.
The symbol of the Ricci operator was described in \cite{DT,Ga,B}
and it equals (we choose $p,q,\dots{}\in T^*$; it suffices to define the map on decomposable quadrics)
 $$
\z_{\op{Ric}}(p^2\otimes q^2)=\langle p,q\rangle_g\,p\,q-\tfrac12(|p|^2_gq^2+|q|^2_gp^2),
 $$
which is clearly an epimorphic map for any signature of $g$.
The prolongation
 $$
\z^{(k)}_{\op{Ric}}(\tfrac1{k+2}p^{k+2}\otimes q^2)= \langle p,q\rangle_g\,p^k\otimes p\,q-
\tfrac12(|p|^2_g\,p^k\ot q^2+|q|^2_g\,p^k\ot p^2)
 $$
is not epimorphic for $k>0$ as follows from the Bianchi identity.

Symbols of the Einstein equations (\ref{Ev}) are precisely
 $$
g_{k+2}=\op{Ker}(\z^{(k)}_{\op{Ric}}),\ k\ge0,
 $$
and we let $g_0=S^2T^*$, $g_1=T^*\ot S^2T^*$.

We calculate the Spencer cohomology of $\E$ (\ref{Ev})
by constructing resolutions to the symbols of the Ricci operator.
The first Spencer complex is exact.%
\com{
 $$
\begin{array}{ccccccccc}
 && 0 && 0 && 0\\
 && \downarrow && \downarrow && \downarrow\\
 0 & \to & g_2 & \longrightarrow & g_1\ot T^* & \longrightarrow & g_0\ot\La^2T^* & \to 0\\
 && \downarrow && \downarrow && \downarrow\\
 0 & \to & S^2T^*\ot S^2T^* & \to & T^*\ot S^2T^*\ot T^* &
\to & S^2T^*\ot\La^2T^* & \to 0\\
 && \downarrow && \downarrow && \downarrow\\
 0 & \to & S^2T^* & \longrightarrow & 0 && 0\\
 && \downarrow \\
 && 0
\end{array}
 $$
 }
The second Spencer complex is included into the commutative diagram,
implying $H^{1,1}(\E)=S^2T^*$ and $H^{2-i,i}(\E)=0$ for $i\ne1$:

 $$
\begindc{\commdiag}[3]
 \obj(20,50)[12]{$0$}
 \obj(50,50)[13]{$0$}
 \obj(80,50)[14]{$0$}
 \obj(0,40)[21]{$0$}
 \obj(20,40)[22]{\ $g_2$\ }
 \obj(50,40)[23]{\ $g_1\ot T^*$\ }
 \obj(80,40)[24]{\ $g_0\ot\La^2T^*$\ }
 \obj(100,40)[25]{$0$}
 \obj(0,30)[31]{$0$}
 \obj(20,30)[32]{\ $S^2T^*\ot S^2T^*$\ }
 \obj(50,30)[33]{\ $T^*\ot S^2T^*\ot T^*$\ }
 \obj(80,30)[34]{\ $S^2T^*\ot\La^2T^*$\ }
 \obj(100,30)[35]{$0$}
 \obj(0,20)[41]{$0$}
 \obj(20,20)[42]{\ $S^2T^*$\ }
 \obj(50,20)[43]{$0$}
 \obj(80,20)[44]{$0$}
 \obj(20,10)[52]{$0$}
 \mor{12}{22}{} \mor{13}{23}{}  \mor{14}{24}{}
 \mor{22}{32}{} \mor{23}{33}{} \mor{24}{34}{}
 \mor{32}{42}{\footnotesize$\z_\text{Ric}$\,}[\atright,\solidarrow]
  \mor{33}{43}{} \mor{34}{44}{}
 \mor{42}{52}{}
 \mor{21}{22}{} \mor{22}{23}{} \mor{23}{24}{} \mor{24}{25}{}
 \mor{31}{32}{} \mor{32}{33}{} \mor{33}{34}{} \mor{34}{35}{}
 \mor{41}{42}{} \mor{42}{43}{}
 \mor(20,21)(46,39){}[\atright, \dasharrow]
\enddc
 $$
In what follows we shorten $S^kT^*$ to $S^k$, and use similar
notation $\La^k=\La^kT^*$ for readability of the diagrams. The third Spencer
complex is included into the commutative diagram, implying
$H^{1,2}(\E)=T^*$ and $H^{3-i,i}(\E)=0$ for $i\ne2$:

 $$
\begindc{\commdiag}[3]
 \obj(15,50)[12]{$0$}
 \obj(40,50)[13]{$0$}
 \obj(65,50)[14]{$0$}
 \obj(90,50)[15]{$0$}
 \obj(0,40)[21]{$0$}
 \obj(15,40)[22]{\ $g_3$\ }
 \obj(40,40)[23]{\ $g_2\ot T^*$\ }
 \obj(65,40)[24]{\ $g_1\ot\La^2$\ }
 \obj(90,40)[25]{\ $g_0\ot\La^3$\ }
 \obj(105,40)[26]{$0$}
 \obj(0,30)[31]{$0$}
 \obj(15,30)[32]{\ $S^3\ot S^2$\ }
 \obj(40,30)[33]{\ $S^2\ot S^2\ot T^*$}
 \obj(65,30)[34]{$T^*\ot S^2\ot\La^2$}
 \obj(90,30)[35]{\ $S^2\ot\La^3$\ }
 \obj(105,30)[36]{$0$}
 \obj(0,20)[41]{$0$}
 \obj(15,20)[42]{\ $T^*\ot S^2$\ }
 \obj(40,20)[43]{\ $S^2\ot T^*$\ }
 \obj(65,20)[44]{$0$}
 \obj(90,20)[45]{$0$}
 \obj(0,10)[51]{$0$}
 \obj(15,10)[52]{\ $T^*$\ }
 \obj(40,10)[53]{$0$}
 \obj(15,0)[62]{$0$}
 \mor{12}{22}{} \mor{13}{23}{} \mor{14}{24}{} \mor{15}{25}{}
 \mor{22}{32}{} \mor{23}{33}{} \mor{24}{34}{} \mor{25}{35}{}
 \mor{32}{42}{\footnotesize$\z_\text{Ric}^{(1)}$\,}[\atright,\solidarrow]
  \mor{33}{43}{} \mor{34}{44}{} \mor{35}{45}{}
 \mor{42}{52}{\footnotesize$\z_{\mathfrak{d}}$\,}[\atright,\solidarrow]
  \mor{43}{53}{}
 \mor{52}{62}{}
 \mor{21}{22}{} \mor{22}{23}{} \mor{23}{24}{} \mor{24}{25}{} \mor{25}{26}{}
 \mor{31}{32}{} \mor{32}{33}{} \mor{33}{34}{} \mor{34}{35}{} \mor{35}{36}{}
 \mor{41}{42}{} \mor{42}{43}{} \mor{43}{44}{}
 \mor{51}{52}{} \mor{52}{53}{}
 \mor(17,11)(32,21){}[\atright, \dashline]
 \mor(29,19)(44,29){}[\atright, \dashline]
 \mor(47,31)(59,39){}[\atright, \dasharrow]
\enddc
 $$
The formula for the symbol of operator (\ref{div}) $\mathfrak{d}:C^\infty(S^2T^*)\to C^\infty(T^*)$,
 $$
\z_{\mathfrak{d}}(p\otimes q\cdot r)=\tfrac12\bigl(\langle p,q\rangle_g\,r+\langle p,r\rangle_g\,q-\langle q,r\rangle_g\,p\bigr)
 $$easily implies exactness of the first column.
The next commutative diagram is exact:

 $$
\begindc{\commdiag}[3]
 \obj(15,50)[12]{$0$}
 \obj(40,50)[13]{$0$}
 \obj(65,50)[14]{$0$}
 \obj(90,50)[15]{$0$}
 \obj(115,50)[16]{$0$}
 \obj(0,40)[21]{$0$}
 \obj(15,40)[22]{\ $g_4$\ }
 \obj(40,40)[23]{\ $g_3\ot T^*$\ }
 \obj(65,40)[24]{\ $g_2\ot\La^2$\ }
 \obj(90,40)[25]{\ $g_1\ot\La^3$\ }
 \obj(115,40)[26]{\ $g_0\ot\La^4$\ }
 \obj(130,40)[27]{$0$}
 \obj(0,30)[31]{$0$}
 \obj(15,30)[32]{\ $S^4\ot S^2$\ }
 \obj(40,30)[33]{\ $S^3\ot S^2\ot T^*$}
 \obj(65,30)[34]{$S^2\ot S^2\ot\La^2$}
 \obj(90,30)[35]{$T^*\ot S^2\ot\La^3$\ }
 \obj(115,30)[36]{\ $S^2\ot\La^4$\ }
 \obj(130,30)[37]{$0$}
 \obj(0,20)[41]{$0$}
 \obj(15,20)[42]{\ $S^2\ot S^2$\ }
 \obj(40,20)[43]{\ $T^*\ot S^2\ot T^*$\ }
 \obj(65,20)[44]{\ $S^2\ot\La^2$\ }
 \obj(90,20)[45]{$0$}
 \obj(115,20)[46]{$0$}
 \obj(0,10)[51]{$0$}
 \obj(15,10)[52]{\ $T^*\ot T^*$\ }
 \obj(40,10)[53]{\ $T^*\ot T^*$\ }
 \obj(65,10)[54]{$0$}
 \obj(15,0)[62]{$0$}
 \obj(40,0)[63]{$0$}
 \mor{12}{22}{} \mor{13}{23}{} \mor{14}{24}{} \mor{15}{25}{} \mor{16}{26}{}
 \mor{22}{32}{} \mor{23}{33}{} \mor{24}{34}{} \mor{25}{35}{} \mor{26}{36}{}
 \mor{32}{42}{\footnotesize$\z_\text{Ric}^{(2)}$\,}[\atright,\solidarrow]
  \mor{33}{43}{} \mor{34}{44}{} \mor{35}{45}{} \mor{36}{46}{}
 \mor{42}{52}{\footnotesize$\z_{\mathfrak{d}}^{(1)}$\,}[\atright,\solidarrow]
  \mor{43}{53}{} \mor{44}{54}{}
 \mor{52}{62}{} \mor{53}{63}{}
 \mor{21}{22}{} \mor{22}{23}{} \mor{23}{24}{} \mor{24}{25}{} \mor{25}{26}{} \mor{26}{27}{}
 \mor{31}{32}{} \mor{32}{33}{} \mor{33}{34}{} \mor{34}{35}{} \mor{35}{36}{} \mor{36}{37}{}
 \mor{41}{42}{} \mor{42}{43}{} \mor{43}{44}{} \mor{44}{45}{}
 \mor{51}{52}{} \mor{52}{53}{} \mor{53}{54}{}
\enddc
 $$
and this extends to the commutative diagram for any $k\ge0$, with
exact rows and columns (we draw the diagram for the case $n=4$, but this has an obvious extension):

 $$
\begindc{\commdiag}[3]
 \obj(14,50)[12]{$0$}
 \obj(38,50)[13]{$0$}
 \obj(65,50)[14]{$0$}
 \obj(92,50)[15]{$0$}
 \obj(119,50)[16]{$0$}
 \obj(0,40)[21]{$0$}
 \obj(14,40)[22]{$g_{k+4}$}
 \obj(38,40)[23]{$g_{k+3}\ot T^*$}
 \obj(65,40)[24]{$g_{k+2}\ot\La^2$}
 \obj(92,40)[25]{$g_{k+1}\ot\La^3$}
 \obj(119,40)[26]{$g_k\ot\La^4$}
 \obj(137,40)[27]{\!$0$}
 \obj(0,30)[31]{$0$}
 \obj(14,30)[32]{$S^{k+4}\ot S^2$}
 \obj(38,30)[33]{$S^{k+3}\ot S^2\ot T^*$}
 \obj(65,30)[34]{$S^{k+2}\ot S^2\ot\La^2$}
 \obj(92,30)[35]{$S^{k+1}\ot S^2\ot\La^3$}
 \obj(119,30)[36]{$S^k\ot S^2\ot\La^4$}
 \obj(137,30)[37]{\!$0$}
 \obj(0,20)[41]{$0$}
 \obj(14,20)[42]{$S^{k+2}\ot S^2$}
 \obj(38,20)[43]{$S^{k+1}\ot S^2\ot T^*$}
 \obj(65,20)[44]{$S^k\ot S^2\ot\La^2$}
 \obj(92,20)[45]{$S^{k-1}\ot S^2\ot\La^3$}
 \obj(119,20)[46]{$S^{k-2}\ot S^2\ot\La^4$}
 \obj(137,20)[47]{$0$}
 \obj(0,10)[51]{$0$}
 \obj(15,10)[52]{$S^{k+1}\ot T^*$}
 \obj(38,10)[53]{$S^k\ot T^*\ot T^*$}
 \obj(65,10)[54]{$S^{k-1}\ot T^*\ot\La^2$}
 \obj(92,10)[55]{$S^{k-2}\ot T^*\ot\La^3$}
 \obj(119,10)[56]{$S^{k-3}\ot T^*\ot\La^4$}
 \obj(137,10)[57]{$0$}
 \obj(15,0)[62]{$0$}
 \obj(38,0)[63]{$0$}
 \obj(65,0)[64]{$0$}
 \obj(92,0)[65]{$0$}
 \obj(119,0)[66]{$0$}
 \mor{12}{22}{} \mor{13}{23}{} \mor{14}{24}{} \mor{15}{25}{} \mor{16}{26}{}
 \mor{22}{32}{} \mor{23}{33}{} \mor{24}{34}{} \mor{25}{35}{} \mor{26}{36}{}
 \mor{32}{42}{\footnotesize$\z_\text{Ric}^{(k+2)}$\,}[\atright,\solidarrow]
  \mor{33}{43}{} \mor{34}{44}{} \mor{35}{45}{} \mor{36}{46}{}
 \mor{42}{52}{}[\atright,\solidarrow]
 \obj(9,15)[]{{\footnotesize${\z^{(k+1)}_{\mathfrak{d}}}$\!}}
 \mor{43}{53}{} \mor{44}{54}{} \mor{45}{55}{} \mor{46}{56}{}
 \mor{52}{62}{} \mor{53}{63}{} \mor{54}{64}{} \mor{55}{65}{} \mor{56}{66}{}
 \mor{21}{22}{} \mor{22}{23}{} \mor{23}{24}{} \mor{24}{25}{} \mor{25}{26}{} \mor{26}{27}{}
 \mor{31}{32}{} \mor{32}{33}{} \mor{33}{34}{} \mor{34}{35}{} \mor{35}{36}{} \mor{36}{37}{}
 \mor{41}{42}{} \mor{42}{43}{} \mor{43}{44}{} \mor{44}{45}{} \mor{45}{46}{} \mor{46}{47}{}
 \mor{51}{52}{} \mor{52}{53}{} \mor{53}{54}{} \mor{54}{55}{} \mor{55}{56}{} \mor{56}{57}{}
\enddc
 $$

Since $H^{k,l}(\E)=0$ for $k\ge2$ and all $l$, the symbolic system
$g=\oplus g_k$ of $\E$ is involutive. To prove involutivity of PDE
system $\E$ it is thus enough to check that the actual compatibility
conditions belonging to $H^{1,2}(\E)$ vanish. But these elements, as
follows from calculation of their symbols, coincide with Bianchi
relations and thus their equality to zero holds identically.

Notice that we don't use specific features of Lorentz geometry, and our arguments work
generally (though the diagrams become larger). We have proved:

 \begin{theorem}
The only nonzero Spencer $\delta$-cohomology of Einstein equation (\ref{E})$_{\mathcal{T}=0}$
in any dimension and signature are
 $$
H^{0,0}(\E)\simeq S^2T^*,\ H^{1,1}(\E)\simeq S^2T^*,\ H^{1,2}(\E)\simeq T^*.
 $$
The Einstein equation $\E$ is involutive.
 \end{theorem}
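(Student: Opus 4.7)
The plan is to compute $H^{*,*}(\E)$ by constructing, for each $k\ge 0$, a four-term resolution of the symbol space $g_{k+2}=\op{Ker}(\z^{(k)}_{\op{Ric}})$ in terms of free $S^*T^*$-modules, using the Ricci symbol together with its Bianchi correction $\z_{\mathfrak{d}}$. Concretely, I expect the short complex
\[
0\to g_{k+2}\to S^{k+2}\ot S^2\xrightarrow{\z^{(k)}_{\op{Ric}}} S^{k}\ot S^2\xrightarrow{\z^{(k-1)}_{\mathfrak{d}}} S^{k-1}\ot T^*\to 0
\]
to be exact for each $k\ge 0$ (with the convention that the last two terms are zero for $k=0$ and $k=0,1$ respectively). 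Exactness of the first arrow is the definition of $g_{k+2}$; surjectivity of the second arrow follows from the explicit formula for $\z_{\mathfrak{d}}$ given in the text, which is nondegenerate on the relevant piece; and exactness in the middle is just the infinitesimal Bianchi identity, which says the image of $\z^{(k)}_{\op{Ric}}$ is precisely the kernel of $\z^{(k-1)}_{\mathfrak{d}}$.

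Given these resolutions, I form the large commutative double complex whose $k$th row is the Spencer complex (\ref{Spencer}) based on $g_{k+?}\ot\La^?T^*$, and whose columns are the resolutions above tensored with $\La^iT^*$. The non-top rows become free Koszul-type complexes of the form $S^{k+?}\ot S^2\ot\La^*T^*$ or $S^{k+?}\ot T^*\ot\La^*T^*$, all of which are exact in positive Koszul degrees (this is standard $\delta$-acyclicity of the trivial symbolic system $J^\infty\pi$). So I can feed the diagram into a routine chase: starting from an element $\alpha\in g_{k+2-i}\ot\La^iT^*$ with $\delta\alpha=0$, I lift down the column, use row exactness to chase, and pull back to produce a preimage under $\delta$. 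The diagrams displayed above for $k=0,1,2$ are the explicit instances; the general pattern stabilizes at $k\ge 2$, showing $H^{k,l}(\E)=0$ there.

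What survives the chase is precisely the material in the lower-left corner of each diagram that cannot be chased away. For the second Spencer complex this is the quotient $S^2T^*$ at the $(1,1)$ position, giving $H^{1,1}(\E)\simeq S^2T^*$; for the third Spencer complex the chase gets stuck at the $(1,2)$ position with the kernel $T^*$ of $\z_{\mathfrak{d}}$ interpreted modulo image of $\z_{\op{Ric}}^{(1)}$, yielding $H^{1,2}(\E)\simeq T^*$; and $H^{0,0}(\E)\simeq g_0=S^2T^*$ by definition. The main bookkeeping obstacle is making sure the diagram chase really identifies the surviving pieces with the named spaces $S^2T^*$ and $T^*$ rather than a quotient thereof, but this amounts to invoking the explicit formulas for $\z_{\op{Ric}}$ and $\z_{\mathfrak{d}}$.

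Finally, since $H^{k,l}(\E)=0$ for $k\ge 2$ and $l>0$, the symbolic system $g=\oplus g_k$ is involutive in the sense of the paper's background section. To upgrade this to involutivity of the PDE system $\E$ itself, I have to check that the single obstruction $W_2\in H^{1,2}(\E)\simeq T^*$ vanishes on the equation $\op{Ric}=\Lambda g$. But by the construction of the isomorphism $H^{1,2}(\E)\simeq T^*$ via the last column of the diagram, the class $W_2$ is represented precisely by $\mathfrak{d}(\op{Ric}-\Lambda g)=\op{div}_g(G(\op{Ric}))-\Lambda\,\op{div}_g(g)$, which equals zero identically by the second Bianchi identity (\ref{div}) and $\nabla g=0$. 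Hence $\E$ is formally integrable, and combined with involutivity of its symbol, it is involutive.
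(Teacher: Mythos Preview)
Your approach is essentially identical to the paper's: build the double complex with Spencer rows and the resolution $0\to g_{k+2}\to S^{k+2}\ot S^2\to S^k\ot S^2\to S^{k-1}\ot T^*\to 0$ as columns, then chase. The identification of the surviving cohomology and the vanishing of $W_2$ via Bianchi are also the same.

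There is, however, a genuine gap in your justification of column exactness. You write that ``exactness in the middle is just the infinitesimal Bianchi identity, which says the image of $\z^{(k)}_{\op{Ric}}$ is precisely the kernel of $\z^{(k-1)}_{\mathfrak{d}}$.'' That is not what Bianchi says. The identity $\mathfrak{d}(\op{Ric})=0$ only gives $\op{Im}(\z^{(k)}_{\op{Ric}})\subset\op{Ker}(\z^{(k-1)}_{\mathfrak{d}})$, i.e.\ that the sequence is a \emph{complex}. The reverse inclusion is the substantive step and does not follow from any identity; it requires either a dimension count (once you know surjectivity of $\z^{(k)}_{\op{Ric}}$ onto $\op{Ker}\z^{(k-1)}_{\mathfrak{d}}$ fails only by a computable amount) or an explicit verification that generators of $\op{Ker}(\z_{\mathfrak{d}})$ lie in $\op{Im}(\z^{(1)}_{\op{Ric}})$. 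The paper carries this out in the Appendix (Lemma~3), either by dimensional reasons citing \cite{DT,Ga} or by producing explicit preimages under $\z^{(1)}_{\op{Ric}}$ for a generating set of $\op{Ker}(\z_{\mathfrak{d}})$. Without this, your resolution is only a complex and the diagram chase does not go through. (A minor related slip: in your description of $H^{1,2}$ you refer to ``the kernel $T^*$ of $\z_{\mathfrak{d}}$''; the $T^*$ in the diagram is the \emph{codomain} of $\z_{\mathfrak{d}}$, and the cohomology arises from the cokernel of the chase, not a kernel.)
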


\section{Relation with Cartan approach}

In Cartan theory involutivity is checked via Cartan characters, which are defined
as follows. Consider a symbolic system $g$ of order $k$:
 $$
g_i=S^iT^*\ot N,\ i<k;\qquad g_k\subset S^kT^*\ot N;\qquad
g_i=g_k^{(i-k)}\subset S^iT^*\ot N,\ i>k.
 $$
Let $0=V_n^*\subset V_{n-1}^*\subset\dots\subset V_0^*=T^*$ be a generic complete flag
(difference of dimensions is 1; $n=\dim T=\dim M$). By definition
 $$
s_i=\dim(g_k\cap S^kV^*_{i-1}\ot N)-\dim(g_k\cap S^kV^*_i\ot N),\quad 1\le i\le n
 $$
(the sequence monotonically decreases), so that $\dim g_k=s_1+\dots+s_n$.

Cartan test \cite{C$_2$,Ma,BCG$^3$} claims that symbolic system $g$ is involutive iff
 $$
\dim g_{k+1}=s_1+2s_2+\dots+ns_n.
 $$
In this case we also have $\dim g_{k+1}=s_1'+\dots+s_n'$, where $s_i'=s_i+\dots+s_n$
are the Cartan characters for the prolongation, i.e.
 $$
s_i'=\dim(g_{k+1}\cap S^{k+1}V^*_{i-1}\ot N)-\dim(g_{k+1}\cap S^{k+1}V^*_i\ot N),
\quad 1\le i\le n.
 $$
Thus we can calculate the dimensions of symbol spaces via Cartan characters:
 \begin{equation}\label{dimg}
\dim g_l=m\binom{n+l-1}l,\ l<k,\qquad
\dim g_l=\sum_{i=1}^n\binom{l-k+i-1}{i-1}s_i,\ i\ge k,
 \end{equation}
where $m=\dim N$, which we also denote by $s_0$.

Let us relate Cartan characters and Spencer $\delta$-cohomology for an involutive system
of pure order $k$. The only nontrivial dimensions of the latter are
$h_0=\dim H^{0,0}=m$ and $h_i=\dim H^{k-1,i}$, $1\le i\le n$.
To one side the relation is given by

 \begin{prop}\label{prop1}
The numbers $(h_0,\dots,h_n)$ are expressed through $(s_0,\dots,s_n)$ as: $h_0=s_0$ and
 $$
h_l=(-1)^l\sum_{j=1}^ns_j\sum_{i=0}^{l-1}(-1)^i\binom{n}i\binom{l+j-i-2}{j-1}
+(-1)^ls_0\sum_{i=l}^n(-1)^i\binom{n}i\binom{k+l+n-i-2}{n-1}
 $$
for $\quad 1\le i\le n$.
 \end{prop}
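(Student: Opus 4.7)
\begin{Proof}{Plan of proof}
The approach is to extract each $h_l$ as the Euler characteristic of a single Spencer $\delta$-complex of fixed total degree, and then substitute the dimension formulas (\ref{dimg}). The case $l=0$ is immediate, since $H^{0,0}=g_0=N$ gives $h_0=m=s_0$, so assume $l\ge 1$ henceforth.

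The plan is to consider the $\delta$-complex of total degree $t=k-1+l$:
$$
0\to g_{k-1+l}\to g_{k-2+l}\ot T^*\to\dd\to g_{k-1+l-n}\ot\La^nT^*\to 0
$$
(with the convention $g_j=0$ for $j<0$). By the involutivity hypothesis, the only bidegree $(p,q)$ on the diagonal $p+q=t$ carrying nontrivial Spencer cohomology is $(k-1,l)$: the bidegree $(0,0)$ is ruled out by $t\ge k\ge 1$, while all others vanish by the definition of involutivity. Computing the Euler characteristic therefore yields
$$
(-1)^lh_l=\sum_{i=0}^n(-1)^i\binom{n}{i}\dim g_{k-1+l-i}.
$$
Next, split the sum at $i=l$. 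For $0\le i\le l-1$ the index $j:=k-1+l-i$ satisfies $j\ge k$, so the prolongation part of (\ref{dimg}) gives $\dim g_j=\sum_{r=1}^n\binom{l-i+r-2}{r-1}s_r$. For $l\le i\le n$ we have $j\le k-1$ and the free-symbol part of (\ref{dimg}) gives $\dim g_j=s_0\binom{n+k+l-i-2}{n-1}$, where values with $j<0$ contribute nothing because the binomial coefficient has upper index below $n-1$ and vanishes, legitimising the extension of that sum up to $i=n$. Multiplying through by $(-1)^l$ and swapping the order of summation in the first block produces the formula displayed in the proposition.

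No substantive obstacle appears beyond clean bookkeeping: the two genuine inputs---vanishing of out-of-range Spencer cohomology and the explicit dimension formulas for $\dim g_j$---are both already in hand, so the argument reduces to mechanical binomial substitution together with careful tracking of the boundary between the prolonged regime $j\ge k$ and the free regime $j<k$.
\end{Proof}
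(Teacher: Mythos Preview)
Your argument is correct and is exactly the approach the paper takes: compute the Euler characteristic of the Spencer $\delta$-complex at total degree $t=k-1+l$, invoke involutivity so that the only surviving cohomology is $(-1)^lh_l$, and then substitute the two branches of (\ref{dimg}) according to whether the symbol index is $\ge k$ or $<k$. Your write-up simply makes explicit the split at $i=l$ and the verification that the binomial coefficient in the free-symbol range vanishes once the index drops below zero, details the paper leaves implicit.
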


 \begin{proof}
Due to involutivity the Euler characteristic of Spencer complex (\ref{Spencer})
equals $(-1)^{t-k+1}h_{t-k+1}$ for $t\ge k$, zero for $0<t<k$ and $h_0$ for $t=0$.
Calculating it directly as $\sum_{i=0}^n(-1)^i\dim g_{t-i}\binom{n}i$ and using
(\ref{dimg}) we get the result.
 \end{proof}

The relations above are invertible, but we obtain the inverse formula from
another idea.

 \begin{prop}\label{prop2}
The numbers $(s_0,\dots,s_n)$ are expressed through $(h_0,\dots,h_n)$
in triangular way: $s_0=h_0$ and
 $$
s_l=\binom{n+k-l-1}{k-1}h_0+\sum_{i=n-l+1}^n(-1)^{n-l-i}\binom{i-1}{n-l}h_i,\qquad
l=1,\dots,n.
 $$
 \end{prop}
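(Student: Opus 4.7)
My plan is to work with the Hilbert (Poincar\'e) series $H_g(u):=\sum_{l\ge 0}(\dim g_l)\,u^l$ of the symbolic system. Summing the geometric series in (\ref{dimg}) gives
$$H_g(u)=\sum_{l=0}^{k-1}m\binom{n+l-1}{l}u^l+u^k\sum_{i=1}^n\frac{s_i}{(1-u)^i}.$$
On the other hand, the Euler characteristic at total degree $t$ of Spencer complex (\ref{Spencer}) equals $\sum_{i=0}^n(-1)^i\binom{n}{i}\dim g_{t-i}$; by involutivity the corresponding cohomology is concentrated in bidegrees $(0,0)$ and $(k-1,\ast)$, so the characteristic is $h_0$ at $t=0$ and $(-1)^{t-k+1}h_{t-k+1}$ for $t\ge k$. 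Summing over $t$ yields
$$(1-u)^n H_g(u)=h_0+\sum_{j=1}^n(-1)^j h_j\,u^{k-1+j}.$$
This is the identity underlying Proposition \ref{prop1}; my aim now is to extract the $s_l$ rather than the $h_l$ from it.

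Equating the two expressions for $(1-u)^n H_g(u)$ and substituting $v=1-u$ produces the polynomial identity
$$\sum_{i=1}^n s_i v^{n-i}=\frac{h_0+\sum_{j=1}^n(-1)^j h_j(1-v)^{k-1+j}-mv^n\sum_{l=0}^{k-1}\binom{n+l-1}{l}(1-v)^l}{(1-v)^k}.$$
The LHS is a polynomial of degree $\le n-1$ in $v$, so $s_l$ is just $[v^{n-l}]$ of the RHS. Although the three summands are not individually polynomials, I expand them formally about $v=0$: the $mv^n(\cdots)/(1-v)^k$ piece starts at $v^n$ and contributes nothing in the range $0\le n-l\le n-1$; the expansion $h_0/(1-v)^k=h_0\sum_{t\ge 0}\binom{k+t-1}{t}v^t$ contributes $h_0\binom{n+k-l-1}{n-l}=\binom{n+k-l-1}{k-1}h_0$; and $[v^{n-l}](1-v)^{j-1}=(-1)^{n-l}\binom{j-1}{n-l}$ (nonzero only for $j\ge n-l+1$) combined with the coefficients $(-1)^jh_j$ gives $\sum_{i=n-l+1}^n(-1)^{n-l-i}\binom{i-1}{n-l}h_i$.

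The main technical subtlety is precisely this handling of non-polynomial summands: the three pieces on the RHS are power or Laurent series in $v$, and only their sum is a polynomial. The triangular shape of the resulting formula -- with $h_i$ contributing to $s_l$ only for $i\ge n-l+1$ -- reflects the fact that the $v^n$ prefactor in the subtracted term delays its contribution past the relevant degree $n-1$. The binomial symmetry $\binom{n+k-l-1}{n-l}=\binom{n+k-l-1}{k-1}$ then brings the expression into the form stated in the proposition.
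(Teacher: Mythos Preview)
Your proof is correct. Both you and the paper start from the same pair of identities --- the expression of $\dim g_l$ via Cartan characters (formula (\ref{dimg})) and its expression via the Spencer/Koszul cohomology --- but you package and extract them differently. The paper works with the Hilbert \emph{polynomial}: it treats $\dim g_z$ as a polynomial in $z$ (analytically continued from large integers), equates the two polynomial expressions (\ref{Hdg}) and (\ref{Sdg}), and then reads off the $s_l$ one at a time by substituting $z=k-1,k-2,\dots$ and applying the difference derivative. You instead work with the Hilbert \emph{series} $H_g(u)$, obtain the identity $(1-u)^n H_g(u)=h_0+\sum_j(-1)^j h_j\,u^{k-1+j}$ (the generating-function form of the Euler-characteristic computation behind Proposition~\ref{prop1}), and make the change of variable $v=1-u$ to isolate $\sum_i s_i v^{n-i}$ directly. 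Your route sidesteps the subtlety the paper flags in its footnote (that $\dim g_{k-1}$ must be read as the polynomial value, not the actual dimension) and delivers the full triangular formula in a single coefficient extraction; the paper's route makes the recursive structure ($s_1$ first, then $s_2$, etc.) more transparent.
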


 \begin{proof}
For $l\gg1$ the expression $H_\E(l)=\sum_{i\le l}\dim g_i$ is a polynomial, called Hilbert polynomial of $g$ and so $\dim g_z=H_\E(z)-H_\E(z-1)$ is a polynomial too (for large
integers $z=l$ and we extend it to the space of all $z\in\C$).

The Hilbert polynomial can be computed through the standard resolution of the
symbolic module $g^*$ \cite{Gr,KL$_2$} and we get:
 \begin{multline}\label{Hdg}
\dim g_z=\sum_{i,j}(-1)^j\dim H^{i,j}(g)\cdot\binom{z+n-i-j-1}{n-1}\\
=h_0\binom{z+n-1}{n-1}-h_1\binom{z+n-k-1}{n-1}+h_2\binom{z+n-k-2}{n-1}-\dots
 \end{multline}
On the other hand from (\ref{dimg}) we have the following expression:
 \begin{equation}\label{Sdg}
\dim g_z=\sum_{l=1}^n s_l\binom{z+l-k-1}{l-1}.
 \end{equation}
Comparing (\ref{Hdg}) to (\ref{Sdg}) we obtain the result: At first substitute
$z=k-1$ and get\footnote{One shall be careful: in this substitution $\dim g_z$
is understood as analytic continuation (\ref{Sdg}), because the actual value
of $\dim g_{k-1}$ could be different; on the other hand studying the large values
$l$ one gets the same result.}
 $$
s_1=h_0\binom{n+k-2}{n-1}-h_n,
 $$
then calculate difference derivative by $z$, substitute $z=k-2$ and get the formula
 $$
s_2=h_0\binom{n+k-3}{n-2}-h_{n-1}+h_n\binom{n-1}{n-2}
 $$
and so on.
 \end{proof}

 \begin{rk}
To see that formulae of proposition \ref{prop2} invert these of proposition \ref{prop1}
is not completely trivial: one must use certain combinatorial identities.
 \end{rk}

Now let us apply the result to Einstein vacuum equations (we restrict to
the physical dimension $n=4$, but due to previous formulae the general case is
easily restored). As we calculated in the previous section
 $$
h_0=10,\ h_1=10,\ h_2=4,\ h_3=h_4=0.
 $$
Thus proposition \ref{prop2} implies that the Cartan characters are
 $$
s_1=40,\ s_2=30,\ s_3=16,\ s_4=4.
 $$
This calculation was independently verified in \textsc{Maple}
\textsf{with(DifferentialGeometry):} \cite{A}.

In particular,
the Cartan genre is 4 and the Cartan integer is 4, i.e. the general (analytic)
solution of the Einstein vacuum equations depends on 4 functions of 4 arguments.
This is indeed so due to covariance: the group $\op{Diff}_\text{loc}(M)$ acts
on $\E$ as symmetries.

We can calculate the Hilbert polynomial of the Einstein equation
 $$
H_\E(z)=10+22z+\frac{89z^2}6+3z^3+\frac{z^4}6.
 $$
The first dimensions of the symbol spaces are:
 $$
\dim g_0=10,\ \dim g_1=40,\ \dim g_2=90,\ \dim g_3=164,\ \dim g_4=266,\ \dim g_5=400,\dots
 $$
(this in particular shows that direct calculation can be costly). The Cartan test
works as follows:
 $$
s_1+2s_2+3s_3+4s_4=164=\dim g_3.
 $$

\medskip

{\it Historical remark.\/}
Explicit formulae for Hilbert function and polynomial of PDE systems were calculated
already by Janet \cite{J} (and maybe to some extent were known to Riquier).

Nowadays his result is generalized to arbitrary involutive bases
and this provides a link between Cartan characters and Spencer cohomology.
The first step -- to read off the Hilbert function from an involutive basis
(or Pommaret basis or Cartan characters) -- can be found in a number of papers \cite{Se,Ap,GB,PR}.

The second step comes from commutative algebra, where it is well-known that the Koszul homology
and the Hilbert polynomial are related (beware
that involutive systems are more important
in PDE context and are infrequently discussed in algebraic situations). Since Spencer cohomology
are $\R$-dual to Koszul homology, this yields a principal link, mentioned above.

This link however is not clearly marked in the literature, and the above explicit formulae are new.
In addition we deduce relations in both direction without implicit inversion
(involving lots of combinatorics).

\section{Einstein-Maxwell equations}\label{S4}

These equation extends (\ref{E}) in the sense that the
energy-momenta tensor is prescribed as electromagnetic tensor.
Denote by $J$ the current density. Einstein-Maxwell equations have
the following form\footnote{We set the cosmological constant
$\Lambda=0$, which does not restrict mathematics (can be
incorporated back without destroying any conclusion), but agrees
with physical observations.}:
 \begin{equation}\label{EM1}
\op{Ric}-\tfrac12R\,g=(F^2)_0,\quad dF=0,\quad \delta_gF=J.
 \end{equation}
Here the tensor $F$ in the first equation is viewed as a
(1,1)-tensor (an operator field) via the metric, and
$(F^2)_0=F^2-\frac14\op{Tr}(F^2)$ is the traceless part of its
square, while $F$ in the latter equations is a 2-form, and
$\delta_g=\pm*\,d\,*:\Omega^2M\to\Omega^1M$ is the Hodge
codifferential\footnote{Not to be confused with
Spencer $\delta$-differential.}.

In order not to deal with involutivity of systems of PDEs of
different orders (the theory developed in \cite{KL$_1$}), we can
re-write the system as a pure 2nd order system by introducing the
potential $A\in\Omega^1M$, $F=dA$:
 \begin{equation}\label{EM2}
\op{Ric}-\tfrac12R\,g=(dA\circ dA)_0,\quad \delta_g(dA)=J.
 \end{equation}

Both systems (\ref{EM1}) and (\ref{EM2}) have the following
compatibility condition of order 1 in $g$: $\delta_gJ=0$. Thus they
are not involutive unless $J=0$. This will be assumed at the end of this section.

But let us study at first the pure Maxwell equation (with known $g$),
written as a 2nd order system with operator $\Pi=\delta_g\circ
d$:
 \begin{equation}\label{M2}
\Pi(A)=J.
 \end{equation}
The symbol of this operator equals
 $$
\z_\Pi^{ }:S^2T^*\ot T^*\to T^*,\qquad Q\ot p\mapsto p\lrcorner
Q-\op{Tr}_g(Q)p,
 $$
where in the first term to the right the dualization
$T^*\stackrel{g}\simeq T$ is used. Thus the symbol is epimorphic, while its
prolongations are not, since they have left divisor of zero:
 $$
\z_{\delta_g}^{(k-1)}\circ\z_\Pi^{(k)}=0\quad\text{for}\quad
\z_{\delta_g}:T^*\ot T^*\to\R,\quad q\ot p\mapsto \langle p,q\rangle_g.
 $$
The symbol of $\delta_g$ is however epimorphic together with all its
prolongations and so we get the sequence of commutative diagrams
with all rows and columns exact except for the top (Spencer
$\delta$-complex) and the bottom rows:
 $$
\begindc{\commdiag}[3]
 \obj(20,50)[12]{$0$}
 \obj(50,50)[13]{$0$}
 \obj(80,50)[14]{$0$}
 \obj(0,40)[21]{$0$}
 \obj(20,40)[22]{\ $g_2$\ }
 \obj(50,40)[23]{\ $g_1\ot T^*$\ }
 \obj(80,40)[24]{\ $g_0\ot\La^2T^*$\ }
 \obj(100,40)[25]{$0$}
 \obj(0,30)[31]{$0$}
 \obj(20,30)[32]{\ $S^2\ot T^*$\ }
 \obj(50,30)[33]{\ $T^*\ot T^*\ot T^*$\ }
 \obj(80,30)[34]{\ $T^*\ot\La^2$\ }
 \obj(100,30)[35]{$0$}
 \obj(0,20)[41]{$0$}
 \obj(20,20)[42]{\ $T^*$\ }
 \obj(50,20)[43]{$0$}
 \obj(80,20)[44]{$0$}
 \obj(20,10)[52]{$0$}
 \mor{12}{22}{} \mor{13}{23}{}  \mor{14}{24}{}
 \mor{22}{32}{} \mor{23}{33}{} \mor{24}{34}{}
 \mor{32}{42}{\footnotesize$\z_\Pi^{}$\,}[\atright,\solidarrow]
  \mor{33}{43}{} \mor{34}{44}{}
 \mor{42}{52}{}
 \mor{21}{22}{} \mor{22}{23}{} \mor{23}{24}{} \mor{24}{25}{}
 \mor{31}{32}{} \mor{32}{33}{} \mor{33}{34}{} \mor{34}{35}{}
 \mor{41}{42}{} \mor{42}{43}{}
 \mor(20,21)(46,39){}[\atright, \dasharrow]
\enddc
 $$
This implies $H^{1,1}\simeq T^*$. The next complex
 $$
\begindc{\commdiag}[3]
 \obj(15,50)[12]{$0$}
 \obj(40,50)[13]{$0$}
 \obj(65,50)[14]{$0$}
 \obj(90,50)[15]{$0$}
 \obj(0,40)[21]{$0$}
 \obj(15,40)[22]{\ $g_3$\ }
 \obj(40,40)[23]{\ $g_2\ot T^*$\ }
 \obj(65,40)[24]{\ $g_1\ot\La^2$\ }
 \obj(90,40)[25]{\ $g_0\ot\La^3$\ }
 \obj(105,40)[26]{$0$}
 \obj(0,30)[31]{$0$}
 \obj(15,30)[32]{\ $S^3\ot T^*$\ }
 \obj(40,30)[33]{\ $S^2\ot T^*\ot T^*$}
 \obj(65,30)[34]{$T^*\ot T^*\ot\La^2$}
 \obj(90,30)[35]{\ $T^*\ot\La^3$\ }
 \obj(105,30)[36]{$0$}
 \obj(0,20)[41]{$0$}
 \obj(15,20)[42]{\ $T^*\ot T^*$\ }
 \obj(40,20)[43]{\ $T^*\ot T^*$\ }
 \obj(65,20)[44]{$0$}
 \obj(90,20)[45]{$0$}
 \obj(0,10)[51]{$0$}
 \obj(15,10)[52]{\ $\R$\ }
 \obj(40,10)[53]{$0$}
 \obj(15,0)[62]{$0$}
 \mor{12}{22}{} \mor{13}{23}{} \mor{14}{24}{} \mor{15}{25}{}
 \mor{22}{32}{} \mor{23}{33}{} \mor{24}{34}{} \mor{25}{35}{}
 \mor{32}{42}{\footnotesize$\z_\Pi^{(1)}$\,}[\atright,\solidarrow]
  \mor{33}{43}{} \mor{34}{44}{} \mor{35}{45}{}
 \mor{42}{52}{\footnotesize$\z_{\delta_g}$\,}[\atright,\solidarrow]
  \mor{43}{53}{}
 \mor{52}{62}{}
 \mor{21}{22}{} \mor{22}{23}{} \mor{23}{24}{} \mor{24}{25}{} \mor{25}{26}{}
 \mor{31}{32}{} \mor{32}{33}{} \mor{33}{34}{} \mor{34}{35}{} \mor{35}{36}{}
 \mor{41}{42}{} \mor{42}{43}{} \mor{43}{44}{}
 \mor{51}{52}{} \mor{52}{53}{}
 \mor(17,11)(32,21){}[\atright, \dashline]
 \mor(29,19)(44,29){}[\atright, \dashline]
 \mor(47,31)(59,39){}[\atright, \dasharrow]
\enddc
 $$
yields $H^{1,2}\simeq\R$. Further complexes are already exact.
Here's the next one:
 $$
\begindc{\commdiag}[3]
 \obj(15,50)[12]{$0$}
 \obj(40,50)[13]{$0$}
 \obj(65,50)[14]{$0$}
 \obj(90,50)[15]{$0$}
 \obj(115,50)[16]{$0$}
 \obj(0,40)[21]{$0$}
 \obj(15,40)[22]{\ $g_4$\ }
 \obj(40,40)[23]{\ $g_3\ot T^*$\ }
 \obj(65,40)[24]{\ $g_2\ot\La^2$\ }
 \obj(90,40)[25]{\ $g_1\ot\La^3$\ }
 \obj(115,40)[26]{\ $g_0\ot\La^4$\ }
 \obj(130,40)[27]{$0$}
 \obj(0,30)[31]{$0$}
 \obj(15,30)[32]{\ $S^4\ot T^*$\ }
 \obj(40,30)[33]{\ $S^3\ot T^*\ot T^*$}
 \obj(65,30)[34]{$S^2\ot T^*\ot\La^2$}
 \obj(90,30)[35]{$T^*\ot T^*\ot\La^3$\ }
 \obj(115,30)[36]{\ $T^*\ot\La^4$\ }
 \obj(130,30)[37]{$0$}
 \obj(0,20)[41]{$0$}
 \obj(15,20)[42]{\ $S^2\ot T^*$\ }
 \obj(40,20)[43]{\ $T^*\ot T^*\ot T^*$\ }
 \obj(65,20)[44]{\ $T^*\ot\La^2$\ }
 \obj(90,20)[45]{$0$}
 \obj(115,20)[46]{$0$}
 \obj(0,10)[51]{$0$}
 \obj(15,10)[52]{\ $T^*$\ }
 \obj(40,10)[53]{\ $T^*$\ }
 \obj(65,10)[54]{$0$}
 \obj(15,0)[62]{$0$}
 \obj(40,0)[63]{$0$}
 \mor{12}{22}{} \mor{13}{23}{} \mor{14}{24}{} \mor{15}{25}{} \mor{16}{26}{}
 \mor{22}{32}{} \mor{23}{33}{} \mor{24}{34}{} \mor{25}{35}{} \mor{26}{36}{}
 \mor{32}{42}{\footnotesize$\z_\Pi^{(2)}$\,}[\atright,\solidarrow]
  \mor{33}{43}{} \mor{34}{44}{} \mor{35}{45}{} \mor{36}{46}{}
 \mor{42}{52}{\footnotesize$\z_{\delta_g}^{(1)}$\,}[\atright,\solidarrow]
  \mor{43}{53}{} \mor{44}{54}{}
 \mor{52}{62}{} \mor{53}{63}{}
 \mor{21}{22}{} \mor{22}{23}{} \mor{23}{24}{} \mor{24}{25}{} \mor{25}{26}{} \mor{26}{27}{}
 \mor{31}{32}{} \mor{32}{33}{} \mor{33}{34}{} \mor{34}{35}{} \mor{35}{36}{} \mor{36}{37}{}
 \mor{41}{42}{} \mor{42}{43}{} \mor{43}{44}{} \mor{44}{45}{}
 \mor{51}{52}{} \mor{52}{53}{} \mor{53}{54}{}
\enddc
 $$
and one can easily prolong. Since $\dim H^{*,2}=1$, there is only
one compatibility condition and from its symbol we identify it with
the condition $\delta_gJ=0$ (which comes from the Hodge identity
$\delta_g^2=0$). Since the metric $g$ is fixed this implies:

 \begin{theorem}
The only nonzero Spencer $\delta$-cohomology of Maxwell equation
(\ref{M2})$_{J=0}$ in any dimension and signature are
 $$
H^{0,0}\simeq T^*,\ H^{1,1}\simeq T^*,\ H^{1,2}\simeq\R.
 $$
The Maxwell equation is involutive provided the compatibility $\delta_gJ=0$ holds.
 \end{theorem}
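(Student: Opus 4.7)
The plan is to follow the Einstein-case template of Section \ref{S2} with the Ricci operator replaced by the Maxwell operator $\Pi$ and the Bianchi divergence $\mathfrak{d}$ replaced by the Hodge codifferential $\delta_g$. The starting point, which is the main algebraic ingredient, is the observation that $\z_\Pi$ is surjective, $\z_{\delta_g}$ is surjective together with all its prolongations, and $\z_{\delta_g}\circ\z_\Pi=0$. From this I would deduce, by a dimension count at each order $k\ge 1$, the exact four-term resolution
$$
0\to g_{k+2}\to S^{k+2}\ot T^*\stackrel{\z_\Pi^{(k)}}{\longrightarrow} S^k\ot T^*\stackrel{\z_{\delta_g}^{(k-1)}}{\longrightarrow} S^{k-1}\to 0,
$$
together with the shortened version $0\to g_2\to S^2\ot T^*\to T^*\to 0$ at $k=0$.

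Next I would tensor this resolution with $\La^iT^*$ for $i=0,1,\dots$ and stack the resulting rows, with vertical Spencer $\delta$-differentials, into the bicomplex exhibited in the excerpt for $k=0,1,2$. The vertical columns, being Koszul complexes of free graded $S^*T^*$-modules, are exact in all positive Koszul degrees, so a standard diagram chase localizes the cohomology of the top (Spencer) row in a few corner cells. Carrying this out at $k=0$ extracts $H^{1,1}(\E)\simeq\op{coker}\z_\Pi=T^*$; at $k=1$ the extra row involving $\z_{\delta_g}$ contributes $H^{1,2}(\E)\simeq\op{coker}\z_{\delta_g}=\R$; and for $k\ge 2$ the resolution is long enough that no corner survives, forcing vanishing. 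Together with $H^{0,0}(\E)=g_0=T^*$ this exhausts all nonzero Spencer cohomology, so the symbolic system is involutive.

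The only remaining obstruction to formal integrability is $W_2\in H^{1,2}(\E)\simeq\R$. I would identify its generator by lifting the $\R$-class back along the bicomplex: at the level of symbols it is the scalar image of $\z_{\delta_g}\circ\z_\Pi\equiv 0$, which at the level of operators reflects the Hodge identity $\delta_g\circ\Pi=\delta_g\circ\delta_g\circ d\equiv 0$. Consequently the obstruction evaluated on the inhomogeneous system $\Pi(A)=J$ is a nonzero multiple of $\delta_g J$, so $W_2=0$ precisely when $\delta_g J=0$, which yields the stated involutivity by the criterion recalled in Section 1.

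The routine parts will be the exactness of the four-term resolution (pure dimension count, checked once for all $k$) and the diagram chases, which are formally identical to those of Section \ref{S2}. The genuine obstacle is the final identification: while it is geometrically clear that the generator of $H^{1,2}(\E)$ must come from $\delta_g^2=0$, pinning down the $\R$-class so that its pairing with the source term reproduces $\delta_g J$ (rather than some unrelated scalar) requires careful bookkeeping of signs and Koszul conventions through the chase.
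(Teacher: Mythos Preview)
Your plan is exactly the paper's: the same bicomplex built from the symbol sequence $\z_\Pi$, $\z_{\delta_g}$, the same diagram chases extracting $H^{1,1}\simeq T^*$ and $H^{1,2}\simeq\R$, and the same identification of the single compatibility via the Hodge identity $\delta_g^2=0$. One caution: exactness of your four-term column at $S^k\ot T^*$ is not a pure dimension count, since $\dim g_{k+2}$ is precisely the quantity you do not yet know---you must actually show $\op{Ker}\z_{\delta_g}^{(k-1)}\subset\op{Im}\z_\Pi^{(k)}$; the paper is equally terse here for Maxwell, but its Appendix lemma on the Einstein columns (explicit preimages) gives the template, and the Maxwell analog is easier. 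Your concern about sign bookkeeping in the final step is unnecessary: since $\dim H^{1,2}=1$, the compatibility condition is determined up to scale by its symbol, and the operator identity $\delta_g\Pi=0$ forces it to read $\delta_gJ=0$ without ambiguity.
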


Now we can study Einstein-Maxwell equation. Energy-momentum tensor $\mathcal{T}=(F^2)_0$
satisfies
 $$
\op{div}_g\mathcal{T}=F(\cdot,J^\sharp),
 $$
where $J^\sharp$ is the $g$-lift of $J\in\Omega^1M$ (\cite{K}).
Therefore compatibility conditions for equation (\ref{EM1}) or (\ref{EM2})
are: $F(\cdot,J^\sharp)=0$ and $\delta_gJ=0$. They are trivial only for $J=0$,
and since they have lower order (in $g$ and $F$) this must hold for involutivity.

 \begin{rk}
It is tempted to change the Einstein-Maxwell system in the case $J\ne0$,
similar to DeTurck trick$^{\ref{DeT}}$, to the following:
 $$
\op{Ric}-\tfrac12R\,g=(F^2)_0,\quad dF=0,\quad \delta_gF=\phi^*\!J
 $$
for the unknown $(g,F,\phi)$. While the second compatibility condition
$\delta_g(\phi^*\!J)=0$ yields an underdetermined equation of 2nd order on $\phi$, the first condition
$F(\cdot,\phi^*\!J^\sharp)=0$ implies the 0-order condition $\det F=0$ on $F$, and so the system
cannot be involutive.
 \end{rk}

Thus we should restrict to the case of no external sources: $J=0$.
The key observation is that this system is weakly uncoupled, meaning
that its symbol splits into the sum of symbols of Einstein and
Maxwell equations. Thus Spencer cohomology becomes the direct sum,
and the compatibility condition for the system $\mathcal{EM}$
(\ref{EM2}) is the union of two respective compatibility conditions
(Bianchi and Hodge identities).

 \begin{theorem}
The only nonzero Spencer $\delta$-cohomology of source-free
Einstein-Maxwell equation (\ref{EM2})$_{J=0}$ in any dimension and
signature are
 $$
H^{0,0}(\mathcal{EM})\simeq S^2T^*\oplus T^*,\
H^{1,1}(\mathcal{EM})\simeq S^2T^*\oplus T^*,\
H^{1,2}(\mathcal{EM})\simeq T^*\oplus\R.
 $$
This Einstein-Maxwell equation $\mathcal{EM}$ is involutive.
 \end{theorem}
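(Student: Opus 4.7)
The plan is to exploit the weak uncoupling already flagged in the paragraph preceding the statement: the coupling term $(dA\circ dA)_0$ in (\ref{EM2}) is only \emph{first} order in $A$ (and zero order in $g$), so it contributes nothing to the principal symbol at order $2$. Consequently, I first want to verify that the symbol morphism at order $2$ is block-diagonal. Writing the unknown as $u=(g,A)\in S^2_+T^*\oplus T^*$, the only second-order derivatives that appear are the second derivatives of $g$ in the gravitational equation (giving $\z_{\text{Ric}}$) and the second derivatives of $A$ in the Maxwell equation (giving $\z_\Pi$); the cross terms vanish. Hence
\[
g_k^{\mathcal{EM}} \;=\; g_k^{\mathcal E}\oplus g_k^{\mathcal M}\qquad(k\ge 0),
\]
and the same splitting propagates to every prolongation because the intersection defining $g_k^{(r)}$ is taken blockwise.

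Next I would observe that the Spencer $\delta$-differential acts only on the $\Lambda^\bullet T^*$ factor of $g_{t-i}\otimes\Lambda^i T^*$, so the direct-sum decomposition of the symbolic system is inherited termwise by the whole complex (\ref{Spencer}). Consequently the $\delta$-cohomology is additive:
\[
H^{i,j}(\mathcal{EM})\;\simeq\; H^{i,j}(\mathcal E)\oplus H^{i,j}(\mathcal M).
\]
Substituting the two preceding theorems yields the claimed values of $H^{0,0}$, $H^{1,1}$ and $H^{1,2}$, and vanishing of $H^{i,j}(\mathcal{EM})$ in every other bidegree. In particular $H^{i,j}(\mathcal{EM})=0$ for $i\ge 2$, so the symbolic system is involutive in the sense recalled in Section~1.

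Finally, it remains to show that the obstruction $W_2\in H^{1,2}(\mathcal{EM})$ actually vanishes on solutions. Under the direct-sum decomposition its two components are precisely the compatibility conditions of the Einstein and Maxwell blocks: the first projects to the Bianchi relation $\mathfrak d(\op{Ric})=0$ treated in Section~\ref{S2}, and the second projects to the Hodge identity $\delta_g\delta_g(dA)=0$ used in the Maxwell case; both are differential identities that hold on every metric/potential pair, so $W_2\equiv 0$. The only real point of care is to justify the splitting rigorously for the \emph{nonlinear} system: one must check that the obstructions built from the curvature $W_2$ factor through the symbol decomposition, i.e.\ that no lower-order cross-term can generate a new nontrivial class. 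Since the nonlinear coupling $(dA\circ dA)_0$ enters only as a source of order lower than the symbol level, this is a routine verification but it is the step that requires the most attention, after which the theorem follows from combining the two previous ones.
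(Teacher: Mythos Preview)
Your proposal is correct and follows exactly the paper's approach: the paper's own argument is the single paragraph preceding the theorem, which says the system is ``weakly uncoupled'' so the symbol (hence the Spencer cohomology) splits as the direct sum of the Einstein and Maxwell pieces, and the compatibility condition is the union of the Bianchi and Hodge identities. You have simply spelled out in more detail why the coupling term $(dA\circ dA)_0$, being of lower order, does not affect the principal symbol, and why the $\delta$-differential respects the splitting.

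One small sharpening: the gravitational compatibility is not literally the bare identity $\mathfrak d(\op{Ric})=0$; differentiating $G(\op{Ric})=(dA\circ dA)_0$ gives $\mathfrak d(\op{Ric})=\op{div}_g\bigl((dA\circ dA)_0\bigr)$, and the right-hand side equals $F(\cdot,(\delta_g dA)^\sharp)$, which vanishes \emph{by the Maxwell equation} $\delta_g dA=J=0$. So the cross-term you flag as ``the step that requires the most attention'' is precisely this: the Einstein-block obstruction vanishes not as a universal identity but modulo the Maxwell equation. This is still enough for $W_2=0$ on the equation locus, and it is exactly what the paper means by ``the compatibility condition \dots\ is the union of two respective compatibility conditions.''
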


Couple of remark to properly place this results are of order.

 \begin{rk}
With Einstein-Maxwell system (\ref{EM1}) we get $H^{0,0}=g_0\simeq
S^2\oplus\La^2=T^*\ot T^*$, which correspond to Rainich "already
unified field theory" \cite{R} (so that we don't have bosonic or
fermionic parts, but just tensors). The other cohomology do not sum
(since belong to different bi-grades) but unite and we get that the
only non-vanishing Spencer cohomology of (\ref{EM1}) are:
 \begin{multline*}
H^{0,0}\simeq T^*\ot T^*,\ H^{0,1}\simeq T^*\oplus\La^3T^*,\
H^{1,1}\simeq S^2T^*,\\
H^{0,2}\simeq\R\oplus\Lambda^4T^*,\ H^{1,2}\simeq T^*,\\
H^{0,3}\simeq\Lambda^5T^*,\ H^{0,4}\simeq\Lambda^6T^*,\dots
 \end{multline*}
Notice that for $n=4$ the latter line disappears.
 \end{rk}

 \begin{rk}
Following Rainich \cite{R} Einstein-Maxwell equations are equivalent to the system
 $$
(\op{Ric}^2)_0=0,\ R=0,
 $$
where $\op{Ric}$ is viewed as operator and $L_0=L-\frac14\op{Tr}(L)$ is the
traceless part of an operator $L$. Though relation between the two systems is non-local (but
rather simple integral), involutivity holds for them simultaneously
(however the Spencer cohomology vary, as well as Cartan characters). The latter system,
though more compact\footnote{It contains 10 equations on 10 unknowns,
the same as for Einstein equation (\ref{E}).}, is fully non-linear and is more complicated.
 \end{rk}

\section{Relativity equations of other types}\label{S5}

 \begin{theorem}\label{Gen}
Consider the following PDE system $\E$
 \begin{equation}\label{GenEeq}
G[Ric(g)]=\mathcal{T},\quad L[g,\phi]=0,
 \end{equation}
where $\mathcal{T}=\mathcal{T}[g,\phi]$ is an operator of order 1 in the fields $\phi$ and
the gravitation (metric) $g$,
and $L:C^\infty(M,N_1)\to C^\infty(M;N_2)$ is an (under)determined operator of order $r$
in $\phi$ when $g$ fixed ($\dim N_1\ge\dim N_2$; it is also a differential operator in $g$ of
order $\le r$; we assume $r=1,2$).

Suppose that the Bianchi identity $\op{div}_g(\mathcal{T})=0$ is a differential corollary of
equations $L=0$. Then the system $\E$ is involutive and its only non-trivial Spencer cohomology groups are
 $$
H^{0,0}(\E)=S^2T^*\oplus N_1,\ \ H^{*,1}(\E)=S^2T^*\oplus N_2,\ \
H^{1,2}(\E)=T^*.
 $$
 \end{theorem}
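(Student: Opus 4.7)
The plan is to mimic and extend the weak-uncoupling argument used for Einstein-Maxwell in Section \ref{S4}. First I would reduce the system to a pure second-order one: if $r=2$ this is automatic, and if $r=1$ I would prolong $L=0$ once, obtaining an equivalent pure order-$2$ system whose Spencer cohomology is read off from that of the original by a transparent shift. Henceforth $\E$ is treated as a PDE system of order $2$ on the pair $(g,\phi)\in S^2_+T^*\oplus N_1$.

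Second, I would decompose the order-$2$ symbol. The assumption that $\mathcal{T}$ is of order $1$ in both $g$ and $\phi$ ensures that at the top-derivative level the Einstein equation is governed exactly by $\z_{\op{Ric}}$ of Section \ref{S2}, acting on $S^2T^*\ot S^2T^*$ with no $\phi$ contribution. On the $L$-side, (under)determinacy makes the $\phi$-symbol $\z_L^{\phi}:S^2T^*\ot N_1\to N_2$ surjective; any top-order $g$-derivatives in $L$ would contribute an off-diagonal block $\z_L^{g}:S^2T^*\ot S^2T^*\to N_2$. The full symbol $\z_\E$ is thus block-lower-triangular and, using surjectivity of $\z_L^\phi$, its kernel fits into a short exact sequence $0\to g_k^L\to g_k(\E)\to g_k^E\to 0$ at every order $k\ge 2$.

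Third, these sequences would assemble into a short exact sequence of Spencer $\delta$-complexes connecting the combined system $\E$ with the vacuum Einstein equation of Section \ref{S2} and with the (under)determined operator $L$. The induced long exact sequence in Spencer cohomology, together with Theorem 1 and the standard diagram chase for an (under)determined operator (of exactly the type performed for Maxwell in Section \ref{S4}), would yield $H^{0,0}(\E)\simeq S^2T^*\oplus N_1$, $H^{*,1}(\E)\simeq S^2T^*\oplus N_2$, and $H^{1,2}(\E)\simeq T^*$ arising from the Bianchi term, with all other groups vanishing.

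Finally, for involutivity of $\E$ as a PDE system I would verify that the only structure function $W_2\in H^{1,2}(\E)\simeq T^*$ vanishes: its symbol is precisely $\op{div}_g\mathcal{T}$, which by hypothesis is a differential corollary of $L=0$, so $W_2\equiv 0$ on $\E$. The hard part is the third step: the off-diagonal symbol $\z_L^{g}$ could in principle create nontrivial connecting homomorphisms in the long exact sequence and spoil the clean direct-sum picture of Einstein-Maxwell. The Bianchi hypothesis is exactly what forces these connecting maps to vanish at the cohomological level, so that $H^{1,2}(\E)$ is controlled purely by the Einstein contribution.
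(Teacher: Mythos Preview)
Your approach is essentially the same as the paper's, just repackaged: both exploit the block-lower-triangular form of the symbol $\z_{G\oplus L}$ and reduce to the Einstein computation of Section~\ref{S2} plus the (under)determinacy of $L$. The paper runs a single bi-complex with four-term columns
\[
0\to g_{k+3}\to S^{k+3}\ot V_1\xrightarrow{\z_{G\oplus L}} S^{k+1}\ot V_2\xrightarrow{\z_{\text{div}\oplus 0}} S^k\ot T^*\to 0
\]
(with $V_i=S^2\oplus N_i$) and verifies exactness at $S^{k+1}\ot V_2$ by solving $\z_G(a_1)=b_1$ via Section~\ref{S2} and then $\z_L^\phi(a_2)=b_2-\z_L^g(a_1)$ via surjectivity of $\z_L^\phi$; your short exact sequence $0\to g^L\to g(\E)\to g^E\to 0$ of symbolic systems and the ensuing long exact sequence encode exactly the same two ingredients.

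One point to correct: the Bianchi hypothesis plays \emph{no role} in the symbolic (cohomological) computation. The connecting maps $H^{1,j}(E)\to H^{0,j+1}(L)$ in your long exact sequence vanish simply because $H^{0,j}(L)=0$ for $j\ge 2$, which is the involutivity of an (under)determined operator (cf.\ the Appendix lemma; note that Maxwell, which you invoke, is actually \emph{over}determined and has $H^{1,2}\ne 0$, so it is not the right model here). The off-diagonal block $\z_L^g$ only affects the \emph{existence} of the short exact sequence, and there surjectivity of the prolongations of $\z_L^\phi$ suffices. The hypothesis $\op{div}_g\mathcal{T}\in[[L]]$ is used exactly once, and only at the non-symbolic level: to conclude that the structure function $W_2\in H^{1,2}(\E)\simeq T^*$ vanishes on $\E$. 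You state this correctly in your penultimate paragraph; the final paragraph's attribution is the only slip.
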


For $r=2$ we have $H^{*,1}(\E)=H^{1,1}$. For $r=1$ we get $H^{*,1}(\E)=H^{1,1}\oplus H^{0,1}$
and the system is of mixed orders in the sense of \cite{KL$_1$}.
It is possible to treat the operator $L$ of mixed (and high) orders $r_1<\dots<r_s$ using the same technique,
but we omit this for transparency of exposition.

Let us notice that the theorem assumption on compatibility is equivalent to $\op{div}_g(\mathcal{T})\in[[L]]$,
where the latter denotes the differential ideal of $L$.
This happens due to (under)determinacy of $L$ and implies that the symbol $\z_{\op{div}_g\mathcal{T}}$
is divisible by $\z_L$.

 \begin{proof}
Let us restrict for shortness of the diagrams\footnote{For other $r$ one shall
change $S^{k+3}\ot V_1$ to $S^{k+3}\ot S^2\oplus S^{k+5-r}\ot N_1$ and have similar changes elsewhere.}
to the case $r=2$ .
Similar to Section \ref{S2} we derive the claim from the bi-complex
 $$
\begindc{\commdiag}[3]
 \obj(14,50)[12]{$0$}
 \obj(38,50)[13]{$0$}
 \obj(65,50)[14]{$0$}
 \obj(92,50)[15]{$0$}
 \obj(-1,40)[21]{$0$}
 \obj(14,40)[22]{$g_{k+3}$}
 \obj(38,40)[23]{$g_{k+2}\ot T^*$}
 \obj(65,40)[24]{$g_{k+1}\ot\La^2$}
 \obj(92,40)[25]{$g_{k}\ot\La^3$}
 \obj(111,40)[26]{$\dots$}
 \obj(-1,30)[31]{$0$}
 \obj(14,30)[32]{$S^{k+3}\ot V_1$}
 \obj(38,30)[33]{$S^{k+2}\ot V_1\ot T^*$}
 \obj(65,30)[34]{$S^{k+1}\ot V_1\ot\La^2$}
 \obj(92,30)[35]{$S^{k}\ot V_1\ot\La^3$}
 \obj(111,30)[36]{$\dots$}
 \obj(-1,20)[41]{$0$}
 \obj(14,20)[42]{$S^{k+1}\ot V_2$}
 \obj(38,20)[43]{$S^{k}\ot V_2\ot T^*$}
 \obj(65,20)[44]{$S^{k-1}\ot V_2\ot\La^2$}
 \obj(92,20)[45]{$S^{k-2}\ot V_2\ot\La^3$}
 \obj(111,20)[46]{$\dots$}
 \obj(-1,10)[51]{$0$}
 \obj(15,10)[52]{$S^{k}\ot T^*$}
 \obj(38,10)[53]{$S^{k-1}\ot T^*\ot T^*$}
 \obj(65,10)[54]{$S^{k-2}\ot T^*\ot\La^2$}
 \obj(92,10)[55]{$S^{k-3}\ot T^*\ot\La^3$}
 \obj(111,10)[56]{$\dots$}
 \obj(15,0)[62]{$0$}
 \obj(38,0)[63]{$0$}
 \obj(65,0)[64]{$0$}
 \obj(92,0)[65]{$0$}
 \mor{12}{22}{} \mor{13}{23}{} \mor{14}{24}{} \mor{15}{25}{}
 \mor{22}{32}{} \mor{23}{33}{} \mor{24}{34}{} \mor{25}{35}{}
 \mor{32}{42}{\footnotesize$\z_{G\oplus L}^{(k+1)}$\,}[\atright,\solidarrow]
  \mor{33}{43}{} \mor{34}{44}{} \mor{35}{45}{}
 \mor{42}{52}{}[\atright,\solidarrow]
 \obj(9,15)[]{{\footnotesize${\z^{(k)}_{\text{div}\oplus 0}}$\ }}
 \mor{43}{53}{} \mor{44}{54}{} \mor{45}{55}{}
 \mor{52}{62}{} \mor{53}{63}{} \mor{54}{64}{} \mor{55}{65}{}
 \mor{21}{22}{} \mor{22}{23}{} \mor{23}{24}{} \mor{24}{25}{} \mor{25}{26}{}
 \mor{31}{32}{} \mor{32}{33}{} \mor{33}{34}{} \mor{34}{35}{} \mor{35}{36}{}
 \mor{41}{42}{} \mor{42}{43}{} \mor{43}{44}{} \mor{44}{45}{} \mor{45}{46}{}
 \mor{51}{52}{} \mor{52}{53}{} \mor{53}{54}{} \mor{54}{55}{} \mor{55}{56}{}
\enddc
 $$
where $V_1=S^2\oplus N_1$ and $V_2=S^2\oplus N_2$.
The columns are exact and all the rows except the top one are exact as well (for all $k$ but $-1$ and $0$).
This claim has to be checked only for the fist column at the place $S^{k+1}\ot V_2$.

The prolonged symbol of the equation has triangular form (we omit the sign of prolongation)
 $$
\z_{G\oplus L}=\begin{bmatrix}\z_G & 0 \\ \z_L^g & \z_L^\phi \end{bmatrix}:
(S^{k+3}\ot S^2)\oplus(S^{k+3}\ot N_1)\to(S^{k+1}\ot S^2)\oplus(S^{k+1}\ot N_2)
 $$
and the next operator is
 $$
\z_{\text{div}\oplus 0}=\begin{bmatrix}\z_{\text{div}} & 0 \\ 0 & 0 \end{bmatrix}:
(S^{k+1}\ot S^2)\oplus(S^{k+1}\ot N_2)\to (S^k\ot T^*)\oplus 0.
 $$
Suppose $\z_{\text{div}\oplus 0}(b_1,b_2)=0$, i.e. $\op{div}(b_1)=0$. We wish to solve
$\z_{G\oplus L}(a_1,a_2)=(b_1,b_2)$ that is equivalent to $\z_G(a_1)=b_1$ and
$\z_L^\phi(a_2)=b_2-\z_L^g(a_1)$. The first equation is solvable by the results of Section \ref{S2}.
The second is solvable by the assumptions of the Theorem.

The claim about Spencer cohomology follows and we conclude symbolic involutivity.
The compatibility condition in $H^{1,2}(\E)$ is precisely the same as before -- Bianchi condition --
and it holds identically due to the assumptions.
 \end{proof}

Let us now specify certain important sub-cases of the above theorem. In some occasions we will
need to assume the Lorentzian signature (more generally: non-definite $g$) and any dimension $n>2$,
while for the rest the metric tensor can be arbitrary. We refer to \cite{K,St}
for physical motivations.

\smallskip

{\bf 1. Pure radiation (null dust).} In this case
$\mathcal{T}=\epsilon\,k^\flat\ot k^\flat$, where $k$ is a null vector field $|k|_g^2=0$,
$k^\flat=\langle k,\cdot\rangle_g$ its $g$-dual and $\epsilon>0$ a scalar function (it can be absorbed
into $k$, but we refrain from doing it).

The compatibility $\op{div}_g\mathcal{T}=0$ is equivalent to the condition
 $$
\nabla_kk=\lambda k,\ \text{ where }\lambda=-\mathcal{L}_k(\epsilon)-\epsilon\op{div}_g(k)
 $$
($\mathcal{L}_v$ is the Lie derivative along $v$).
Thus $k$ is a null pre-geodesic and properly choosing $\epsilon$ (re-parametrization)
we get $\lambda=0$ and the above under-determined equation becomes determined:
the first order operator $L[g,k,\epsilon]$ splits into $L_1\oplus L_2$ with
 $$
L_1=\nabla_kk=0,\qquad L_2= \op{div}_g(k)+\mathcal{L}_k(\log\epsilon)=0.
 $$
Denoting by $\ell=\z_L:T^*\ot(T\oplus\R)\to T\oplus\R$ the symbol of this operator at $(k,\epsilon)$
we get its value on a covector $p$:
 $$
\ell_p(\kappa,e)=
\ell(p\ot(\kappa,e))=(\langle p,k\rangle\kappa,\langle p,\kappa\rangle+\langle p,k\rangle\tfrac{e}\epsilon).
 $$
Thus $\det[\ell_p]=\langle p,k\rangle^n/\epsilon$ is non-zero for $p\not\in\op{Ann}(k)$ and the operator $L$
is indeed determined. As an effect, the equations of pure radiation are involutive.

\smallskip

{\bf 2. Perfect fluid.} Here
$\mathcal{T}=(\epsilon+P)\,U^\flat\ot U^\flat+P\,g$, where $U$ is the particles velocity field, $|U|_g^2=-1$,
and $P$ denotes the pressure. The latter is related to the energy density $\epsilon$ through
an equation of state (constitutive relation,
conservation law etc)\footnote{If this is not imposed, the operator $L$ remains underdetermined.}.
This is often an ODE between $\epsilon$ and $P$, but we will just suppose that $P=P(\epsilon)$.

The compatibility conditions are ($\pi_V$ is the orthogonal projection to the subspace $V$)
 $$
(\epsilon+P(\epsilon)) \nabla_UU+\pi_{U^\perp}(\op{grad}_gP(\epsilon))=0,\qquad
\mathcal{L}_U(\epsilon)+(\epsilon+P(\epsilon))\op{div}_g(U)=0,
 $$
and we take these to be the components $(L_1,L_2)$ of our operator $L=L[g,U,\epsilon]$.
The symbol $\ell=\z_L:T^*\ot(T\oplus\R)\to T\oplus\R$ at $(U,\epsilon)$
has the following we value on a covector $p$:
 $$
\ell_p\left(\left[\begin{array}{c}u\\ e\end{array}\right]\right)=
\left[\begin{array}{cc}(\epsilon+P(\epsilon))\,\langle p,U\rangle\cdot I & \pi_{U^\perp}(p)\,P'(\epsilon)\\
(\epsilon+P(\epsilon))\,p & \langle p,U\rangle\end{array}\right]\cdot
\left[\begin{array}{c}u\\ e\end{array}\right].
 $$
Since $\pi_{U^\perp}(p)=p-\frac{\langle p,U\rangle}{|U|^2_g}U$ we get
 $$
\det\ell_p= (\epsilon+P(\epsilon))^{n-1}\,\langle p,U\rangle^n\,
\left|\begin{array}{cc}I & \Bigl(\frac{p}{\langle p,U\rangle}-\frac{U}{\langle U,U\rangle}\Bigr)\,P'(\epsilon)\\
\frac{p}{\langle p,U\rangle} & 1\end{array}\right|.
 $$
This does not vanish for $p=U$ and hence does not vanish identically. Thus the operator $L$ is determined.
Therefore the equations of perfect fluid are involutive.

\smallskip

{\bf 2a. Specifications: dust etc.} For $P=0$ ($\epsilon>0$) the operator $L=(L_1,L_2)$ with
 $$
L_1=\nabla_UU=0,\qquad L_2= \op{div}_g(U)+\mathcal{L}_U(\log\epsilon)=0.
 $$
has the same form as the one
for the null dust. This case is called the {\em dust\/}. Notice though that the compatibility condition
$L=0$ is obtained without re-normalization in this case since $T=\R\cdot U\oplus U^\perp$
(contrary to the null dust case).

Some other cases -- {\em incoherent radiation\/} $P=\epsilon/3$ and {\em stiff matter\/} $P=\epsilon$
-- provide the other involutive relativity systems.

\smallskip

{\bf 3. Prescribed energy-momentum.} Consider now equation (\ref{E}) with fixed non-zero $\mathcal{T}$.
We change this according to footnote$^{\ref{DeT}}$ to the equation
 \begin{equation}\label{EDT1}
G(\op{Ric}(g))=\phi^*\mathcal{T}
 \end{equation}
with an unknown local diffeomorphism $\phi$. We add the compatibility (conservation law)
 \begin{equation}\label{EDT2}
L[g,\phi]=\op{div}_g(\phi^*\mathcal{T})=0.
 \end{equation}
It is the 2nd order operator in $\phi$ (1st order in $g$) and its symbol
$\z_L=\z_L^\phi:S^2\ot T\to T$ at $\phi\in\op{Diff}_\text{loc}(M)$ equals
 $$
\z_L(p^2\ot \varphi)(q) =|p|^2_g\,\mathcal{T}(\varphi,d\phi(q^\sharp))+
\langle p,q\rangle_g\,\mathcal{T}(\varphi,d\phi(p^\sharp)),\qquad p,q\in T^*,\varphi\in T.
 $$
Clearly if $\mathcal{T}$ is degenerate, then $\z_L(p^2\ot \varphi)=0$ for $\varphi\in\op{Ker}(\mathcal{T})$.
Let us show that for non-degenerate $\mathcal{T}$ and $|p|_g\ne0$ the operator
$\ell_p=\z_L(p^2\ot\cdot):T\to T$ is an isomorphism.

Choose any $\varphi\ne0$. If $\mathcal{T}(\varphi,d\phi(p^\sharp))\ne0$, then $\ell_p(\varphi)(p)=2|p|^2_g\mathcal{T}(\varphi,d\phi(p^\sharp))\ne0$ and so $\ell_p(\varphi)\ne0$.
But if $\mathcal{T}(\varphi,d\phi(p^\sharp))=0$, then
$\ell_p(\varphi)(q)=|p|^2_g\mathcal{T}(\varphi,d\phi(q^\sharp))\ne0$ for a.e. $q\in T^*$ since
$d\phi:T\to T$ is an isomorphism. This shows that $\ell_p$ is injective and hence isomorphism.

Therefore the operator $L$ is determined for non-degenerate $\mathcal{T}$ and so the equation $\E$
(\ref{EDT1})+(\ref{EDT2}) is involutive.
This is a new proof of DeTurck's $C^\oo$ theorem \cite{DT}.

 \com{
In the case of electromagnetic tensor $\mathcal{T}$, as in Section \ref{S4}, the corresponding Maxwell equations
$L=0$ are overdetermined, and so Theorem \ref{Gen} does not apply. However we can use the last trick to include
the electromagnetic source $J$ into the field equations.

Indeed, the source-free equations are $\op{Diff}_\text{loc}(M)$-gauge invariant, and so the equation
with source $J$ can be re-cast into the form (\ref{EM1}) with $\phi^*J$ instead of $J$, where $J\in \mathcal{D}_M$
is a fixed vector field and $\phi\in\op{Diff}_\text{loc}(M)$ is unknown.

The compatibility condition, which we again take for $L=0$, in this case is ...
Its symbol at $\phi\in\op{Diff}_\text{loc}(M)$ equals
 $$
(\z_L)_p(\varphi)=\langle\varphi,p\rangle\,\langle p,J\rangle.
 $$
Therefor for $J\ne0$ the EM .. are NOT involutive.
 }

\section{Conclusion}

In this paper we treated involutivity of several relativity field equations via formal theory of PDEs.
Some classical cases, such as Dirac-Weyl equations, and many modern specifications are not touched,
though can be treated via the same technique.  Some other cases are more delicate.

For instance the scalar minimally coupled matter field equations with the action (here $\phi$ is a function
and $dV_g=\sqrt{|\det g|}dx$ the volume form)
 $$
\mathcal{S}[g,\phi]=\int_M[R-|d\phi|_g^2-m^2\phi^2]\,dV_g
 $$
have block-form symbol operator $\z_{G\oplus L}$ (with
$\mathcal{T}=d\phi\ot d\phi-\tfrac12|d\phi|_g^2-\tfrac12m^2\phi^2$; $L[\phi]=\square\phi-m^2\phi$),
and so are involutive by Theorem \ref{Gen}.

Non-minimally coupled field equations with the action
 $$
\mathcal{S}[g,\phi]=\int_M[f(\phi)R-|d\phi|_g^2-m^2\phi^2]\,dV_g
 $$
have the general symbol operator, and our methods do not apply directly.
However the system can be re-written with triangular symbol, and then methods of
Section \ref{S5} show it is involutive\footnote{For almost all functions $f(\phi)$,
in particular for the important case $f(\phi)=1-\xi\phi^2$.}.

Let us deduce several corollaries of the involutivity. They are
based on the Cartan-K\"ahler theorem claiming that a formally
integrable analytic system has local solutions. Since involutivity
implies formal integrability, we conclude

 \begin{theorem}
Let $j^k_0g$ be a jet of metric ($1<k<\infty$), which satisfies
$(k-2)$-jet of the vacuum Einstein equations (\ref{E})$_{\mathcal{T}=0}$.
Then there exists a local analytic solution $g$ of this equation
with the prescribed jet $j^k_0g$ at the point $0\in M$.

In particular, if a Riemann tensor $\op{Riem}_0$ at the point is
given, which satisfies the obvious algebraic compatibility
conditions with a metric $g_0\in S^2T^*_0M$ through (\ref{E}),
then there exists an analytic solution to the vacuum Einstein
equations with the given initial data $(g_0,\op{Riem}_0)$.
 \end{theorem}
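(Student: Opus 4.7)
The plan is to derive the theorem directly from involutivity of $\E$ (Theorem 1) combined with the Cartan--Kähler theorem invoked just above the statement. Since the Einstein equation (\ref{E})$_{\mathcal{T}=0}$ has order $2$, the hypothesis that $j^k_0 g$ satisfies the $(k-2)$-jet of the equation at $0$ is exactly the condition $j^k_0 g \in \E_k = \E_2^{(k-2)}$ in the prolongation tower set up in Section 1.

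First I would use formal integrability. By Theorem 1 the symbolic system is involutive, and the only obstruction $W_2 \in H^{1,2}(\E)$, given by the Bianchi relation, vanishes identically; hence $\E$ is formally integrable in the sense of Section 1, so every projection $\pi_{l+1,l}:\E_{l+1}\to\E_l$ is a submersion. An inductive lifting argument starting from $j^k_0 g\in\E_k$ then produces a compatible family of jets $(j^l_0 g)_{l\ge k}$ with $j^l_0 g\in\E_l$, i.e.\ a formal power series solution at $0$ extending the prescribed $k$-jet.

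Next I would invoke the analytic Cartan--Kähler theorem: for an analytic involutive PDE system, any formal power series solution at a point is the Taylor expansion of a genuine local analytic solution on a neighborhood of that point. This converts the formal extension above into an actual analytic metric $g$ solving (\ref{E})$_{\mathcal{T}=0}$ near $0$ with the prescribed $k$-jet. For the ``in particular'' statement I would apply this with $k=2$: choosing normal coordinates at $0$ adapted to $g_0$, the $1$-jet of $g$ vanishes and its $2$-jet is reconstructed from $\op{Riem}_0$ via the classical identity $g_{ij,kl}(0) = -\tfrac13 (R_{ikjl} + R_{iljk})$, so that the algebraic compatibility $\op{Ric}_0 = \Lambda g_0$ is exactly the condition $j^2_0 g \in \E_2$ needed to apply the first part.

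The main obstacle, so to speak, is the passage from a formal-power-series solution to a convergent one; this is precisely what the analytic Cartan--Kähler theorem supplies, and is the only non-algebraic ingredient. Once that is granted, everything else is routine bookkeeping of the prolongation tower and the standard identification of the $2$-jet of a metric in normal coordinates with the Riemann tensor.
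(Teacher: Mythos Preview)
Your proposal is correct and follows essentially the same approach as the paper: invoke Theorem~1 (involutivity of $\E$), deduce formal integrability, and apply the Cartan--K\"ahler theorem to obtain a local analytic solution through the prescribed jet. The paper's own argument is extremely terse---it simply states that involutivity implies formal integrability and that Cartan--K\"ahler then yields the result---whereas you spell out the jet-lifting step and the normal-coordinate reconstruction for the ``in particular'' clause; these elaborations are correct and fill in details the paper leaves implicit.
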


This is a variation on Gasqui's theorem \cite{Ga}. For non-vacuum field equation (\ref{E}),
one similarly deduce DeTurck theorem \cite{DT} on solvability in analytic category, provided
that the tensor $\mathcal{T}$ is non-degenerate.
Turning now to Einstein-Maxwell equation (\ref{EM1}) or (\ref{EM2})
we arrive at

 \begin{theorem}
Let $j^k_0g,j^k_0F$  be $k$-jets of a metric and an analytic 2-form,
which are related by (jets of) source-free Einstein-Maxwell equation
(\ref{EM1})$_{J=0}$. Then there exists a local analytic solution
$(g,F)$ of this equation with the prescribed jet $(j^k_0g,j^k_0F)$.

In particular, if a metric $g_0\in S^2T^*_0M$, a Riemann tensor $\op{Riem}_0$
and a 2-form $F_0\in\La^2T^*_0M$ at the point $0\in M$ are
given, which satisfy the algebraic compatibility conditions through
the first equation of (\ref{EM1}), then there exists an
analytic solution to the source-free Einstein-Maxwell equations with
the given initial data $(g_0,\op{Riem}_0,F_0)$.
 \end{theorem}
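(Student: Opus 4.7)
The plan is to combine the involutivity of the source-free Einstein-Maxwell system $\mathcal{EM}$ (established in Section \ref{S4}) with the Cartan-K\"ahler theorem, in direct analogy with the preceding theorem for vacuum Einstein. Since $\mathcal{EM}$ is involutive and its only potentially obstructing Spencer cohomology $H^{1,2}(\mathcal{EM}) \simeq T^*\oplus\R$ is realized by the Bianchi and Hodge identities (which vanish identically on any section of $\mathcal{EM}$), the system is formally integrable. Every projection $\pi_{l,l-1}\colon \mathcal{EM}_l \to \mathcal{EM}_{l-1}$ is then a submersion, so any $k$-jet $(j^k_0g, j^k_0F) \in \mathcal{EM}_k$ (with $k \ge 2$) lifts step by step to a formal power-series solution at $0$. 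The defining operators of (\ref{EM1}) being real-analytic, Cartan-K\"ahler promotes this formal series to a genuine local analytic pair $(g,F)$ realizing the prescribed $k$-jet.

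For the second (``in particular'') assertion, I would realize the pointwise data $(g_0, \op{Riem}_0, F_0)$ as a $2$-jet lying in $\mathcal{EM}_2$ and then invoke the first part of the theorem with $k=2$. Working in coordinates normal at $0$ one arranges $\partial g|_0 = 0$; the second derivatives $\partial^2 g|_0$ are then in linear bijection with tensors carrying the algebraic symmetries of a curvature tensor, and can be chosen so as to reproduce $\op{Riem}_0$. The $1$-jet of $F$ can be prescribed freely extending $F_0$ (for instance $F \equiv F_0$ in the chosen chart), and the only zeroth-order condition that (\ref{EM1}) imposes at $0$ is the algebraic Einstein identity $\op{Ric}_0 - \tfrac12 R_0 g_0 = (F_0^2)_0$, which holds by hypothesis. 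Hence the constructed $2$-jet lies in $\mathcal{EM}_2$, and the first part of the theorem delivers the desired analytic solution $(g, F)$.

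The main obstacle, just as in the vacuum case, is the identification of the class in $H^{1,2}(\mathcal{EM})$ with the Bianchi and Hodge identities, without which one could not conclude that formal integrability comes for free rather than as an additional hypothesis. That identification was carried out in Section \ref{S4} by inspecting the symbols of $\op{div}_g$ and $\delta_g$, so for the present theorem nothing beyond bookkeeping is required: it is a direct transcription of the vacuum Einstein argument, using the Spencer cohomology groups listed in the source-free Einstein-Maxwell theorem of Section \ref{S4}.
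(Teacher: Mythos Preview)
Your proposal is correct and follows exactly the paper's approach: the paper's entire ``proof'' is the sentence ``Turning now to Einstein-Maxwell equation (\ref{EM1}) or (\ref{EM2}) we arrive at'' the theorem, i.e.\ involutivity from Section~\ref{S4} plus Cartan--K\"ahler, precisely as you outline. Your treatment of the ``in particular'' clause (building a $2$-jet in normal coordinates from $(g_0,\op{Riem}_0,F_0)$) is more explicit than anything the paper writes, and is fine; the only cosmetic mismatch is that you quote $H^{1,2}(\mathcal{EM})\simeq T^*\oplus\R$, which is the cohomology of the potential formulation (\ref{EM2}), whereas the theorem is stated for (\ref{EM1}) --- but the paper explicitly treats the two as interchangeable here, and the Remark after Theorem~3 records the (EM1) cohomology if you want to cite that instead.
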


Finally we can prove local solvability (with prescribed Cauchy data) of other field equations
considered in Section \ref{S5} (pure radiation, perfect fluid, dust etc).

 \begin{theorem}
Let $j^k_0g,j^k\phi_0F$ be $k$-jets of a metric and additional fields,
related by (jets of) equation (\ref{GenEeq}).
Suppose that both $\mathcal{T}$ and $L$ are analytic.
Then there exists a local analytic solution
$(g,\phi)$ of this equation with the prescribed jet $(j^k_0g,j^k_0\phi)$.
 \end{theorem}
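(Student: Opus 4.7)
The plan is to derive this theorem as a direct corollary of Theorem~\ref{Gen} combined with the Cartan--Kähler theorem, parallel to the two preceding solvability theorems in this section (for vacuum Einstein and source-free Einstein--Maxwell). The key input, already established, is that the symbolic system of $\E$ defined by (\ref{GenEeq}) is involutive with its only obstruction to formal integrability living in $H^{1,2}(\E)\simeq T^*$, and this obstruction vanishes identically because the Bianchi identity is a differential consequence of $L=0$ by hypothesis. Hence $\E$ is not merely symbolically involutive but involutive, and in particular formally integrable: every projection $\pi_{l,l-1}:\E_l\to\E_{l-1}$ is a surjective submersion.

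Next I would argue the lifting step. Given a point of $\E_k$, namely a $k$-jet $(j^k_0 g,\,j^k_0\phi)$ at $0\in M$ satisfying the prescribed jets of $G[\text{Ric}(g)]=\mathcal{T}$ and $L[g,\phi]=0$, formal integrability of $\E$ provides a successive lift to a point of $\E_{k+1}$, thence $\E_{k+2}$, and so on, producing at each stage an admissible higher-order jet that projects to the previous one. Assembling these coherently one obtains a formal power-series solution $(\hat g,\hat\phi)$ centered at $0$ whose truncation at order $k$ is the prescribed jet. (The choice at each prolongation step is typically not unique, reflecting the functional dimensions counted by the Cartan characters, but existence is guaranteed by surjectivity of $\pi_{l,l-1}$.)

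Finally I invoke Cartan--Kähler. Since $\mathcal{T}$ and $L$ are analytic by assumption, the defining functions of $\E_k$ and the symbol maps $\z_G$, $\z_L^g$, $\z_L^\phi$ appearing in the bicomplex of the proof of Theorem~\ref{Gen} are analytic, so the prolongations $\E_l$ are analytic subvarieties cut out with constant-rank projections. The Cartan--Kähler theorem then promotes the formal power-series solution $(\hat g,\hat\phi)$ to a convergent, hence analytic, local solution $(g,\phi)$ on a neighborhood of $0$, agreeing with $(j^k_0 g, j^k_0\phi)$ to order $k$.

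No genuine obstacle is expected: all the nontrivial content has been absorbed into Theorem~\ref{Gen}. The only points requiring modest care are (i) that the hypothesis ``$k$-jets related by (jets of) equation (\ref{GenEeq})'' is understood precisely as $(j^k_0g,j^k_0\phi)\in\E_k$, so that the lifting procedure can start, and (ii) that the analyticity hypothesis on $\mathcal{T}$ and $L$ is used in invoking Cartan--Kähler, not in the involutivity statement itself (which is algebraic and dimension-independent). With these clarifications the proof is essentially a one-line citation of Theorem~\ref{Gen} together with Cartan--Kähler.
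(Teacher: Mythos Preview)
Your proposal is correct and follows exactly the paper's approach: the paper states this theorem (like the two preceding solvability theorems) as an immediate corollary of involutivity (Theorem~\ref{Gen}) together with the Cartan--K\"ahler theorem, with no separate proof beyond the sentence ``Since involutivity implies formal integrability, we conclude.'' Your expanded account of the lifting step and the role of the analyticity hypothesis is more detailed than the paper's one-line justification, but the logical structure is identical.
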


By similar reasons we have local analytic solutions with any admissible Cauchy data
to minimally and non-minimally coupled fields equations etc.

\appendix
\section{Some technicalities}

Here we collect some tedious verifications.
Let us begin by checking the fact that conservation law for (\ref{E})
with fixed $\mathcal{T}$ vanishes only for the vacuum case.

 \begin{lem}
Let $\mathcal{T}\in S^2T^*$ be a constant tensor.
Equation $\op{div}_g(\mathcal{T}_\Lambda)=0$
from Section \ref{S2} is non-trivial only for $\mathcal{T}=0$.
 \end{lem}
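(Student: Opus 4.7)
The plan is to reduce the statement to showing that the first-order differential operator $D_\mathcal{T}\colon g\mapsto\op{div}_g(\mathcal{T})$ is nonzero as a differential operator whenever $\mathcal{T}\ne0$, and to derive a contradiction from its principal symbol. A preliminary simplification is that $\nabla g=0$ gives the identity $\op{div}_g(g)=0$, so $\op{div}_g(\mathcal{T}_\Lambda)=\op{div}_g(\mathcal{T})$ and the $\Lambda g$ term disappears entirely; thus it suffices to treat $\op{div}_g(\mathcal{T})=0$ as a PDE on $g$.

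Next I would compute the principal symbol $\sigma_p(h)$ of $D_\mathcal{T}$ at a covector $p$ applied to a variation $h\in S^2T^*$ of the metric. Since $\mathcal{T}$ is constant, only the $\Gamma$-terms of $\op{div}_g(\mathcal{T})_\xi=-g^{\alpha\beta}(\Gamma^\gamma_{\alpha\beta}\mathcal{T}_{\gamma\xi}+\Gamma^\gamma_{\alpha\xi}\mathcal{T}_{\beta\gamma})$ contribute. Substituting the standard linearisation $\delta\Gamma^\gamma_{\alpha\beta}=\tfrac12 g^{\gamma\delta}(p_\alpha h_{\delta\beta}+p_\beta h_{\delta\alpha}-p_\delta h_{\alpha\beta})$ and then specialising to $h=p\cdot p$ makes each three-term bracket collapse to a single term, giving the key formula
\[
\sigma_p(p^2)_\xi=-\tfrac12\bigl(|p|^2_g\,\mathcal{T}(p^\sharp,\partial_\xi)+\mathcal{T}(p^\sharp,p^\sharp)\,p_\xi\bigr).
\]
If the equation were trivial, i.e.\ $D_\mathcal{T}\equiv0$, this would have to vanish at every $g$ for every non-null $p$, forcing $\iota_{p^\sharp}\mathcal{T}\parallel p$ for all such $p$.

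The final step is pointwise linear algebra: from $\mathcal{T}(v,\cdot)\parallel v^\flat$ for every non-null $v$, a two-vector argument (take $u,v$ linearly independent with $\langle u,v\rangle_g\ne0$, and use symmetry of $\mathcal{T}$ and $g$ to identify the two proportionality scalars) yields $\mathcal{T}=\lambda\,g$ pointwise for a single scalar $\lambda=\lambda(g)$. Since $\mathcal{T}$ is independent of $g$, applying this with two metrics $g_1,g_2$ whose values at a chosen point are not scalar multiples of one another gives $\lambda_1 g_1=\lambda_2 g_2=\mathcal{T}$, forcing $\lambda_1=\lambda_2=0$ and hence $\mathcal{T}=0$, contrary to assumption. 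The main obstacle I expect is the symbol bookkeeping in the second step: $\delta\Gamma$ enters $\mathfrak{d}$ through two distinct index contractions, producing six terms before the substitution $h=p^2$ collapses them to the displayed two-term expression; the remainder is elementary.
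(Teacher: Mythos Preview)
Your proposal is correct and follows essentially the same route as the paper: compute the principal symbol of $g\mapsto\op{div}_g(\mathcal{T}_\Lambda)$, observe that its vanishing forces $\mathcal{T}$ (as an operator via $g$) to be scalar, and then invoke $g$-independence of $\mathcal{T}$ to conclude $\mathcal{T}=0$. The only differences are cosmetic---your upfront reduction $\op{div}_g(\mathcal{T}_\Lambda)=\op{div}_g(\mathcal{T})$ and restriction to $h=p^2$ streamline the bookkeeping, and in fact your displayed symbol formula already yields $\mathcal{T}(p^\sharp,p^\sharp)=0$ upon pairing with $p^\sharp$, so the two-vector and two-metric arguments in your last paragraph can be shortened.
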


 \begin{proof}
We shall calculate symbol of the operator $\op{div}_g(\mathcal{T}_\Lambda)$ with respect to $g$ (which varies),
and regardless $\mathcal{T}$ (which stays constant). Metric $g$ enters via Christoffel coefficients, namely
in local coordinates:
 $$
\op{div}_g(\mathcal{T}_\Lambda)_i=(\mathcal{T}_\Lambda)_{i;j}^j=
\p_{x_j}(\mathcal{T}_\Lambda)^j_i+\Gamma_{kj}^j(\mathcal{T}_\Lambda)_i^k-\Gamma_{ij}^k(\mathcal{T}_\Lambda)_k^j.
 $$
This yields the following formula for the symbol operator
(we $g$-lift $\mathcal{T}_\Lambda$ to the operator field)
 $$
\z_{\op{div}_g\mathcal{T}_\Lambda}(p\ot q^2)
=\tfrac12(|q|^2_g\,\mathcal{T}_\Lambda p-\langle\mathcal{T}_\Lambda q,q\rangle_g\,p).
 $$
This vanishes if and only if the operator $\mathcal{T}_\Lambda$ is scalar. Since $\mathcal{T}$
(as a covariant tensor) is fixed ($g$-independent),
this implies $\mathcal{T}_\Lambda=-\Lambda\,g$ and $\mathcal{T}=0$.
 \end{proof}

Next we study exactness of the complexes from Section \ref{S2}.

 \begin{lem}
The operator $\z_{\mathfrak{d}}^{(k)}:S^{k+1}\ot S^2\to S^k\ot T^*$ is epimorphic.
 \end{lem}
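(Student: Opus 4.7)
The plan is to produce a spanning family of $S^kT^*\ot T^*$ all of whose elements manifestly lie in the image of $\z_\mathfrak{d}^{(k)}$. The central computation is the cancellation obtained by specializing $q=p$ in the formula for $\z_\mathfrak{d}$ already established in Section \ref{S2}: the two cross terms annihilate each other, leaving the clean identity
$$
\z_\mathfrak{d}(p\ot p\cdot r)=\tfrac12|p|^2_g\,r.
$$

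Next I would lift this base case through the prolongation. By definition $\z_\mathfrak{d}^{(k)}$ is the composite of the polarization embedding $S^{k+1}T^*\hookrightarrow S^kT^*\ot T^*$ with $\op{id}\ot\z_\mathfrak{d}$. Applied to a decomposable element $p^{k+1}\ot p\cdot r$ this yields
$$
\z_\mathfrak{d}^{(k)}(p^{k+1}\ot p\cdot r)=c\cdot\tfrac12|p|^2_g\,p^k\ot r
$$
for a nonzero combinatorial constant $c$ coming from the polarization. In particular, whenever $|p|^2_g\ne0$, every tensor of the form $p^k\ot r$ lies in the image of $\z_\mathfrak{d}^{(k)}$ for each $r\in T^*$.

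To conclude, I would invoke a density argument. The subset $\{p\in T^*:|p|^2_g\ne0\}$ is Zariski-open and nonempty since $g$ is non-degenerate, hence Zariski-dense in $T^*$; restricted to any Zariski-dense subset the pure powers $\{p^k\}$ still span $S^kT^*$ by the standard polarization identity in characteristic zero. Tensoring with $T^*$ shows that $\{p^k\ot r\}$ spans $S^kT^*\ot T^*$, and surjectivity of $\z_\mathfrak{d}^{(k)}$ follows. The only non-routine ingredient is the base-case cancellation $\z_\mathfrak{d}(p\ot p\cdot r)=\tfrac12|p|^2_g\,r$; everything else is formal bookkeeping, and no serious obstacle is anticipated.
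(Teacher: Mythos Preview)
Your proposal is correct and follows essentially the same route as the paper: the paper simply records the identity
\[
\z_{\mathfrak{d}}^{(k)}\bigl(\tfrac1{k+1}\,p^{k+1}\ot pq\bigr)=\tfrac{|p|_g^2}2\,p^k\ot q
\]
and observes that such tensors generate $S^k\ot T^*$. Your explicit Zariski-density remark, justifying that non-null $p$ still suffice for the span, is a welcome elaboration of a step the paper leaves implicit.
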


 \begin{proof}
We have:
 $$
\z_{\mathfrak{d}}^{(k)}\bigl(\tfrac1{k+1}p^{k+1}\ot pq\bigr)=
\tfrac{|p|_g^2}2\,p^k\ot q.
 $$
Since the latter vectors generate $S^k\ot T^*$ the claim follows.
 \end{proof}

  \begin{lem}
In the diagrams from Section \ref{S2} all columns are exact.
 \end{lem}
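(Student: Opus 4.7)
The plan is to reduce exactness of every column to exactness of a single master complex and then verify that complex position by position. Each column of the bi-complex in Section \ref{S2} is obtained from a base column by tensoring with some exterior power $\La^i$, and since $\La^i$ is a finite-dimensional vector space (hence flat), it suffices to establish exactness of the base column
$$
0 \to g_t \to S^t\ot S^2 \xrightarrow{\z_{\op{Ric}}^{(t-2)}} S^{t-2}\ot S^2 \xrightarrow{\z_{\mathfrak{d}}^{(t-3)}} S^{t-3}\ot T^* \to 0
$$
for every $t\ge 0$ (terms with negative superscripts are set to zero, which handles the boundary columns automatically).

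Exactness at the first position is the injectivity of the inclusion $g_t \hookrightarrow S^t\ot S^2$; exactness at the second position is the definition $g_t=\ker\z_{\op{Ric}}^{(t-2)}$; exactness at the rightmost position is the surjectivity of $\z_{\mathfrak{d}}^{(t-3)}$, which is the preceding lemma. Thus the only substantive task is exactness at the middle position $S^{t-2}\ot S^2$.

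The inclusion $\op{im}\z_{\op{Ric}}^{(t-2)}\subseteq\ker\z_{\mathfrak{d}}^{(t-3)}$ is the symbol-level avatar of the Bianchi identity $\mathfrak{d}\circ\op{Ric}=0$ and is straightforward to verify from the explicit formulas for the two symbols recorded in Section \ref{S2}. For the reverse inclusion I would argue by dimension: surjectivity of $\z_{\mathfrak{d}}^{(t-3)}$ together with exactness at the three outer positions gives
$$
\dim(\ker\z_{\mathfrak{d}}^{(t-3)})-\dim(\op{im}\z_{\op{Ric}}^{(t-2)}) = \dim g_t - \bigl(\dim(S^t\ot S^2)-\dim(S^{t-2}\ot S^2)+\dim(S^{t-3}\ot T^*)\bigr),
$$
so middle-exactness is equivalent to vanishing of the right-hand side. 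I would establish this Euler-characteristic identity by decomposing $S^t\ot S^2$ into $O(g)$-isotypic components, reading off from the explicit formula for $\z_{\op{Ric}}$ which irreducible summands are annihilated, and comparing dimensions.

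The main obstacle is this dimension count for $g_t$: the formula for $\z_{\op{Ric}}$ is concrete but the kernel dimension must be handled uniformly in $t$, and reconciling it with the expected Euler characteristic is where the genuine combinatorial content sits. An alternative that sidesteps representation theory is to exhibit an explicit $O(g)$-equivariant complement $S^t\ot S^2 = g_t\oplus W_t$ on which $\z_{\op{Ric}}$ restricts to an isomorphism onto its image, and then identify that image directly with $\ker\z_{\mathfrak{d}}^{(t-3)}$ via a parallel Young-symmetry analysis of $S^{t-2}\ot S^2$ and $S^{t-3}\ot T^*$.
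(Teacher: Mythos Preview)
Your structural reduction is correct and matches the paper exactly: tensoring with $\La^i$ is harmless, the outer three positions are handled by definition and by the preceding lemma, and the whole weight sits at the middle term $S^{t-2}\ot S^2$. The paper also records that $\op{im}\z_{\op{Ric}}^{(t-2)}\subset\ker\z_{\mathfrak{d}}^{(t-3)}$ is the symbol of the Bianchi identity, just as you say.

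Where you and the paper diverge is in the treatment of the reverse inclusion. You propose an Euler-characteristic dimension count, to be executed via the $O(g)$-isotypic decomposition of $S^t\ot S^2$; the paper acknowledges this route as legitimate (``the exactness can be verified by dimensional reasons as in \cite{DT,Ga}'') but then carries out a different, constructive argument. Namely, the paper writes down explicit generators of $\ker\z_{\mathfrak{d}}$ (equivalently $(1\ot G)^{-1}\ker\z_{\op{div}}$, using that $G$ is invertible for $n>2$), and for each generator exhibits an explicit preimage under $\z_{\op{Ric}}^{(1)}$ as a concrete linear combination of decomposable tensors built from a mutually orthogonal triple $p,q,h$. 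This is done in detail only for $k=0$, with the remark that prolongations are similar.

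Your approach has the advantage of being uniform in $t$ once the representation-theoretic bookkeeping is set up, and it avoids guessing preimages; its cost is that the combinatorics you flag as ``the main obstacle'' is genuinely nontrivial and you have not carried it out. The paper's approach is elementary and self-contained but requires verifying somewhat intricate identities by hand, and strictly speaking leaves the general $k$ to the reader. Both are valid; neither is complete as written.
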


 \begin{proof}
Formula (\ref{div}) implies that the following sequences are complexes:
 $$
0\to g_{k+3}\to S^{k+3}\ot S^2\stackrel{\z_{\op{Ric}}^{(k+1)}}\longrightarrow S^{k+1}\ot S^2
\stackrel{\z_{\mathfrak{d}}^{(k)}}\to S^{k}\ot T^*\to 0.
 $$
Since the previous discussion implies that the only possible cohomology can be supported at
the term $S^{k+1}\ot S^2$, the exactness can be verified by
dimensional reasons as in \cite{DT,Ga}.

We will show another approach, but restrict for simplicity of formulas to the case $k=0$
(prolongations can be treated similarly).

The space $\op{Ker}(\z_{\op{div}})$ is generated by vectors $p\ot q^2$ and
$|q|_g^2\,p\ot p^2-2|p|^2_g\,q\ot qp$ with $p\perp q$. For $n>2$ the operator $G$ is
invertible: $G^{-1}h=h-\frac1{n-2}\op{Tr}_g(h)g$. Thus
 $$
\op{Ker}(\z_{\mathfrak{d}})=(1\ot G)^{-1}\op{Ker}(\z_{\op{div}})=
\bigl\langle p\ot q^2-\tfrac{|q|_g^2}{n-2}p\ot g,
p\ot q^2-\tfrac{|q|_g^2}{|p|^2_g}p\ot p^2+2q\ot qp\,|\,p\perp q\bigr\rangle
 $$
We need to show that the generating elements belong to $\op{Im}(\z_{\op{Ric}}^{(1)})$.
This follows from exact formulae
 \begin{multline*}
\z_{\op{Ric}}^{(1)}\Bigl(
\tfrac2{|p|_g^2}\,p^2q\ot pq+\tfrac4{|h|_g^2}\,q^2h\ot ph
-\tfrac2{|h|_g^2}\,ph^2\ot q^2-\tfrac{2|q|^2_g}{|p|^2_g|h|^2_g}\,p^2h\ot ph
+\tfrac2{3|p|^2_g}\,p^3\ot q^2\Bigr)\\
=p\ot q^2-\tfrac{|q|_g^2}{|p|^2_g}p\ot p^2+2q\ot qp,
 \end{multline*}
 \begin{multline*}
\z_{\op{Ric}}^{(1)}\Bigl(
\tfrac2{n-2}(g\cdot p)\ot q^2-\tfrac4{n-2}\,g\bar\ot(q^2\ot p)+
\tfrac2{|p|_g^2}\,p^2q\ot pq+\tfrac4{|h|_g^2}\,q^2h\ot ph\\
-\tfrac2{|h|_g^2}\,ph^2\ot q^2-\tfrac{2|q|^2_g}{|p|^2_g|h|^2_g}\,p^2h\ot ph
\Bigr)=p\ot q^2-\tfrac{|q|^2}{n-2}\,p\ot g,
 \end{multline*}
where $p,q,h$ are mutually orthogonal elements (with non-zero length; since
both kernel and image are closed spaces this does not restrict generality),
and $g\bar\ot(q^2\ot p)=\sum\epsilon_i (q^2e_i)\ot(p\,e_i)$ for $g=\sum\epsilon_ie_i^2\in S^2T^*$.
 \end{proof}

Finally the following statements was used in Section \ref{S5}.

 \begin{lem}
Consider a vector bundle $\pi$ over $M$ with fiber -- vector space $N$. Let
$F:C^\infty(\pi)\to C^\infty(M,N)$ be a determined differential operator of order $k$.
Then the corresponding equation
$\E=\op{Ker}(\Delta_F)$ is involutive and its only non-zero Spencer cohomology groups are
$H^{0,0}=H^{k-1,1}=N$.
 \end{lem}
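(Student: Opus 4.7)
The plan is to mimic the bicomplex diagram chase used in Sections \ref{S2} and \ref{S4}: exhibit the Spencer complex of the symbolic system $g$ as the kernel in a short exact sequence of complexes whose other two terms are Koszul complexes, and read off the result from the associated long exact sequence.

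Since $F$ is determined of order $k$, the symbol map $\bar\z_F\colon S^kT^*\ot N\to N$ is epimorphic (indeed $\z_F(p)\colon N\to N$ is an isomorphism for every non-zero $p\in T^*$), and consequently each prolonged symbol $\bar\z_F^{(l)}\colon S^{k+l}T^*\ot N\to S^lT^*\ot N$ is epimorphic as well. By definition $g_{k+l}=\op{Ker}\bar\z_F^{(l)}$ and $g_j=S^jT^*\ot N$ for $j<k$. First I would record the short exact sequence
$$
0\to g_{k+l}\to S^{k+l}T^*\ot N\xrightarrow{\bar\z_F^{(l)}}S^lT^*\ot N\to 0
$$
for every $l\ge 0$, which also holds (trivially) for $k+l<k$ with third term $0$.

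Next, for each fixed total degree $t\ge 0$ I would tensor with $\La^qT^*$ and assemble the three rows into a short exact sequence of complexes $0\to A^\bullet\to B^\bullet\to C^\bullet\to 0$, where
$$
A^q=g_{t-q}\ot\La^qT^*,\quad B^q=S^{t-q}T^*\ot N\ot\La^qT^*,\quad C^q=S^{t-q-k}T^*\ot N\ot\La^qT^*,
$$
with the convention $S^{-m}=0$ for $m>0$; the horizontal differential in every row is the Spencer $\delta$. The one technical point -- which I view as the main obstacle -- is verifying that the prolonged symbol commutes with $\delta$ so that the vertical column really is a map of complexes; this follows from the fact that $\bar\z_F$ is $S^*T^*$-linear, i.e.\ corresponds to an element of the polynomial ring $S^*T^*\ot\op{Hom}(N,N)$, so that the usual derivation-style identity holds. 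Rows $B^\bullet$ and $C^\bullet$ are then Koszul complexes with $N$-coefficients, which are exact in positive total degree and yield $H^0(B)=N$ at $t=0$, $H^0(C)=N$ at $t=k$, all other cohomologies vanishing.

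Finally I would run the long exact sequence in cohomology. For $t=0$ it collapses to $H^0(A)=N$, giving $H^{0,0}(g)=N$. For $0<t<k$, both $B$ and $C$ are acyclic, hence $A$ is acyclic. For $t=k$ the LES reads
$$
0\to H^0(A)\to 0\to N\to H^1(A)\to 0\to\cdots,
$$
so $H^1(A)=N$ and all other $H^q(A)=0$, giving $H^{k-1,1}(g)=N$ and $H^{k-i,i}(g)=0$ for $i\ne 1$. For $t>k$ both $B$ and $C$ are acyclic, so $A$ is acyclic. This accounts for every Spencer cohomology and in particular shows $H^{i,j}(g)=0$ for $i\ne k-1$, $i+j>0$, so the symbolic system is involutive. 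Involutivity of $\E$ itself then follows because the only possible obstruction $W_k$ lives in $H^{k-1,2}(\E)$, which we have just shown to be zero; hence $\E$ is formally integrable and involutive, completing the proof.
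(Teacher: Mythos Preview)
Your argument is correct and is precisely the first of the two proofs the paper sketches (``diagram chase by the bi-complex with vertical complex $0\to g\to ST^*\ot N\to ST^*\ot N\to0$''); you have simply written out the long exact sequence that the paper leaves to the reader. One small imprecision: for a determined operator $\z_F(p)$ is an isomorphism only for \emph{generic} (non-characteristic) $p$, not for every nonzero $p$; nevertheless your key claim that each $\z_F^{(l)}$ is surjective remains valid, since the dual map $S^lT\ot N^*\to S^{k+l}T\ot N^*$ is multiplication by a matrix with nonvanishing determinant over the integral domain $ST$ and is therefore injective. The paper also records a second, independent proof via reduction of Cohen--Macaulay symbolic systems to a generic line (Theorem~A of \cite{KL$_4$}), which you do not use.
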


 \begin{proof}
This is a folklore result, and it is difficult to find the reference. One of the proofs
is the diagram chase by the bi-complex with vertical complex being ($g=\oplus g_i$
denotes the symbolic system corresponding to $\E$)
 $$
0\to g\to ST^*\ot N\stackrel{\z_{F}}\longrightarrow ST^*\ot N\to0
 $$
and the horizontal one being the Spencer complex generated by the first column.

Another possibility is to view this as a direct corollary of Theorem A of \cite{KL$_4$},
which concerns reductions of Cohen-Macaulay systems (ideal of the determined equations is
clearly Cohen-Macaulay). This theorem states that for a subspace $V^*\subset T^*$ transversal
to the characteristic variety $\op{Char}(\E)\subset \mathbb{P}T^*$ (both complexified) the reduced
symbolic system $g\cap SV^*\ot N$ has the same Spencer cohomology as the symbolic system $g$.
It suffices to apply this to any generic 1-dimensional line $V^*$ (for ODEs the claim is obvious)
since $\op{codim}\op{Char}(\E)=1$.
 \end{proof}


\end{document}